\documentclass[10pt]{amsart}
\usepackage{geometry,amsmath,amsthm,amssymb}                
\geometry{letterpaper}                   
\usepackage[parfill]{parskip}    
\usepackage{graphicx}
\usepackage{multicol}
\usepackage{epstopdf}
\usepackage{mathtools}
\usepackage{flafter}
\usepackage{float}
\usepackage{caption}
\usepackage{subcaption}
\usepackage{wrapfig}

\input xy 
\xyoption{all}

\floatstyle{boxed} 
\restylefloat{figure}

\DeclareGraphicsRule{.tif}{png}{.png}{`convert #1 `dirname #1`/`basename #1 .tif`.png}

\newcommand{\C}{\mathbb{C}}
\newcommand{\R}{\mathbb{R}}

\theoremstyle{definition}

\theoremstyle{plain}
\newtheorem{thm}{Theorem}[section]

\theoremstyle{remark}

\numberwithin{equation}{section}

\begin{document}

\title{The Real Dynamics of Bieberbach's Example}

\author{S. Hayes, A. Hundemer, E. Milliken, T. Moulinos*}

\maketitle

 ABSTRACT.\small { Bieberbach constructed in 1933 domains in $ \bf {C}^2$ which were biholomorphic to $ \bf {C}^2$ but not dense. The existence of such domains was unexpected. The special domains Bieberbach considered are basins of attraction of a cubic H\'enon map. This classical method of construction is one of the first applications of dynamical systems to complex analysis. In this paper, the  boundaries of the real sections of Bieberbach's domains will be calculated explicitly as the stable manifolds of the saddle points. The real filled Julia sets and the real Julia sets of Bieberbach's map will also be calculated explicitly and illustrated with computer generated graphics. Basic differences between real and the complex dynamics will be shown.}

 \section{INTRODUCTION}
\normalsize{Bieberbach constructed in 1933 domains in $\C^2$ biholomorphic to $\C^2$ but omitting an open set. The existence of these domains was unexpected, because the analogous statement for the one-dimensional plane $\C$  is false due to Picard's theorem which insures that $\C$ is the only domain in the plane biholomorphic to $\C$.  Such domains $\Omega$ in $\C^2$ are referred to as \emph{Fatou-Bieberbach domains}. Their classical method of construction, due to Fatou, is one of the first applications of dynamical systems to complex analysis.}

Bieberbach considered two domains $\Omega^+_\C$ and $\Omega^-_\C$  which are basins  of attraction of the same automorphism
$$f(z,w):=(w, \frac{z}{2}-w^3+\frac{3}{4}w)$$
Both basins are biholomorphic to $\C^2$, according to a result originating with Poincar\'{e}, but obviously not all of  $\C^2$, since they are disjoint.  These basins are symmetric with respect to the origin, and Bieberbach's map $f$ is one of the simplest having two basins in such a geometric relationship to each other.

For over two decades now, with the incentive from the visualization possibilities offered by computer graphics, renewed interest in higher dimensional complex dynamics has led to many interesting topological results. For example, the basins $\Omega^+_\C$ and $\Omega^-_\C$ have the same boundary in $\C^2$ and that boundary is never a topological manifold ([BS2], Theorem 2).  Surprisingly enough, however,  computer pictures of the real sections of these boundaries look smooth.  The purpose of this paper is to present a proof of that fact. It will be shown that the boundaries of  the real basins $\Omega_+:=\Omega^+_\C\cap\R^2$  and $\Omega_-:=\Omega^-_\C\cap\R^2$ in $\R^2$ coincide and are composed exactly of the real stable manifolds of 3 saddle points. Whereas in the standard literature it is sometimes stated that basin boundaries are smooth on the basis of computer studies or numerical calculations (see [R], pg.503), an explicit proof is given here.

\scriptsize{2000 Mathematics Review Classification: 37C05, 37C25, 37E30\\
Key words: H\'enon maps. Basin boundary. Stable manifolds. Julia sets.}\\

\normalsize{There are several features of Bieberbach's domains illustrating basic differences between real and complex dynamics. Bieberbach's map leads to domains in $\R^2$ bianalytic to all of $\R^2$ whose boundaries coincide. However, in contrast to the complex case, they are not described as the closure of the real stable manifold of an arbitrary saddle point  ([BS2], Theorem 1).  Furthermore, in the complex case there are always infinitely many periodic points [FM], but in Bieberbach's example there are only 5  ([K], Proposition 6.5). In the complex case, the intersection of $\Omega^+_\C$ (resp. $\Omega^-_\C$) with a complex line is always bounded ([BS], Theorem 1), see also ( [K], Theorem 4.3]), whereas $\Omega_+$ and $\Omega_-$ are unbounded.
Another difference is that the boundary of the complex basin $\Omega^+_\C$ is also the boundary of all points in $\C^2$ with unbounded forward orbits [BS2], whereas the the boundary of the real basin 
$\Omega_+$ is not the boundary of the forward escaping set.}\\

The paper is organized as follows: In the next section, first a closed polygon $R$ in $\bf {R}^2$ will be shown to contain all real points with bounded forward as well as backward orbits. Then the fate of the forward and the backward orbit of every point in $R$ will be described. In the third section, the stable and the unstable manifolds of every saddle point will be located. The real filled Julia sets and the real Julia sets are calculated explicitly in the fourth section in terms of the 5 real periodic points. The real basin boundaries can be completely described in the last section which also contains a revealing computer generated image of those basins.\\

* We thank Jeff Galas, who generated Figures 1-7, and Korrigan Clark for their contributions.
\maketitle
\section{ORBIT BEHAVIOR}

Consider $f (x, y) = (y, \frac{x}{2} - y^3 + \frac{3}{4} y)$ as a self-map of $\bf {R}^2$. The points $p_+ = (\frac{1}{2},\frac{1}{2}),  p_- = (-\frac{1}{2}, -\frac{1}{2})$ are obviously fixed points. The third fixed point is at the origin  and that is a saddle. There is also a period two saddle at $p = (-\frac{\sqrt{5}}{2}, \frac{\sqrt{5}}{2})$, $p' = (\frac{\sqrt{5}}{2}, -\frac{\sqrt{5}}{2}).$ The backward iterate of $(x, y)$ is  $f^{-1}(x, y) = (2x^3 -\frac{3}{2} x + 2y, x)$. \\

\begin{wrapfigure}{Hr}{0.4\textwidth}
\begin{center}
\includegraphics[width=0.38\textwidth]{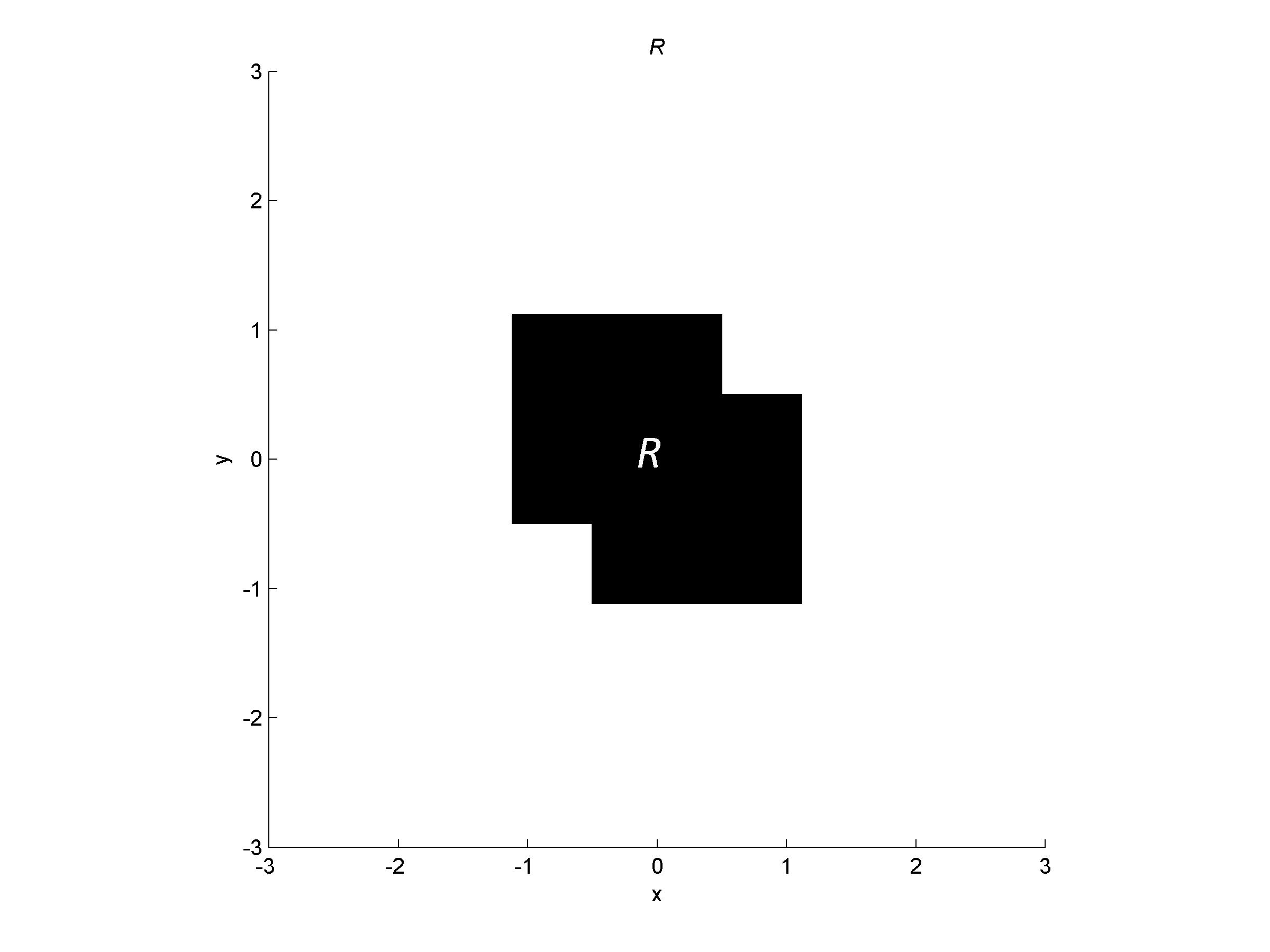}
\end{center}	
\caption{R}
\end{wrapfigure}
The first objective is to locate the set $K_\R$ of points in $\bf{R}^2$ with bounded forward and bounded backward orbits, since they are the observables. $K_\R$ also generates the set $K^+_\R$ of points with bounded forward orbits as well as the set $K^-_\R$ of points with bounded backward orbits (see section 4). We will use a partitioning of the real plane similar to that in [K, p.132]. As a first estimate it will be shown that $K_\R$ is contained in a closed polygon $R$ with corners given by the 8 points: $p, (\frac{1}{2}, \frac{\sqrt{5}}{2}), p_+, (\frac{\sqrt{5}}{2}, \frac{1}{2}), p', (-\frac{1}{2}, -\frac{\sqrt{5}}{2}), p_-, (-\frac{\sqrt{5}}{2}, -\frac{1}{2})$.\\ Moreover, except for the periodic points $ p, p', p_+, p_-$, the set $K_\R$ is in the interior of $R$. The proof  will use the backward iterates

\textbf{Lemma 2.1} $K_\R   \subset R $ and \\
$K_\R \setminus \{p, p', p_+, p_-\} \subset  int  \, R$

\begin{proof}
The proof will show that  outside of the interior of $R$ every point except $p, p', p_+, p_-$ escapes to infinity either under forward or under backward iteration of $f$.  \\
The complement of the interior of $R$  will be partitioned into the following $4$ closed quadrants $Q_k, 1 \leq k \leq 4$ and their reflections $Q_k ' = \sigma(Q_k)$ at the origin where $\sigma(x,y)= (-x, -y)$: 

\[
Q_1 : = \{(x,y) \in \R^2 : x \leq -1/2,  \hspace{1mm} y \leq -1/2 \},\qquad  Q_2 := \{(x,y) \in \R^2 : x \leq -\sqrt{5}/2,  \hspace{1mm} y \leq \sqrt{5}/2 \} 
\]
\[
Q_3 := \{(x,y) \in \R^2 : x \leq -\sqrt{5}/2,  \hspace{1mm} y \geq \sqrt{5}/2 \} , \qquad Q_4 := \{(x,y) \in \R^2 : x \geq -\sqrt{5}/2,  \hspace{1mm} y \geq \sqrt{5}/2 \} 
\]

\begin{figure}[H]
\begin{subfigure}[h]{0.28\textwidth}
\centering
\includegraphics[trim=162mm 0mm 10cm 0mm,clip=true,keepaspectratio=true,scale=0.075]{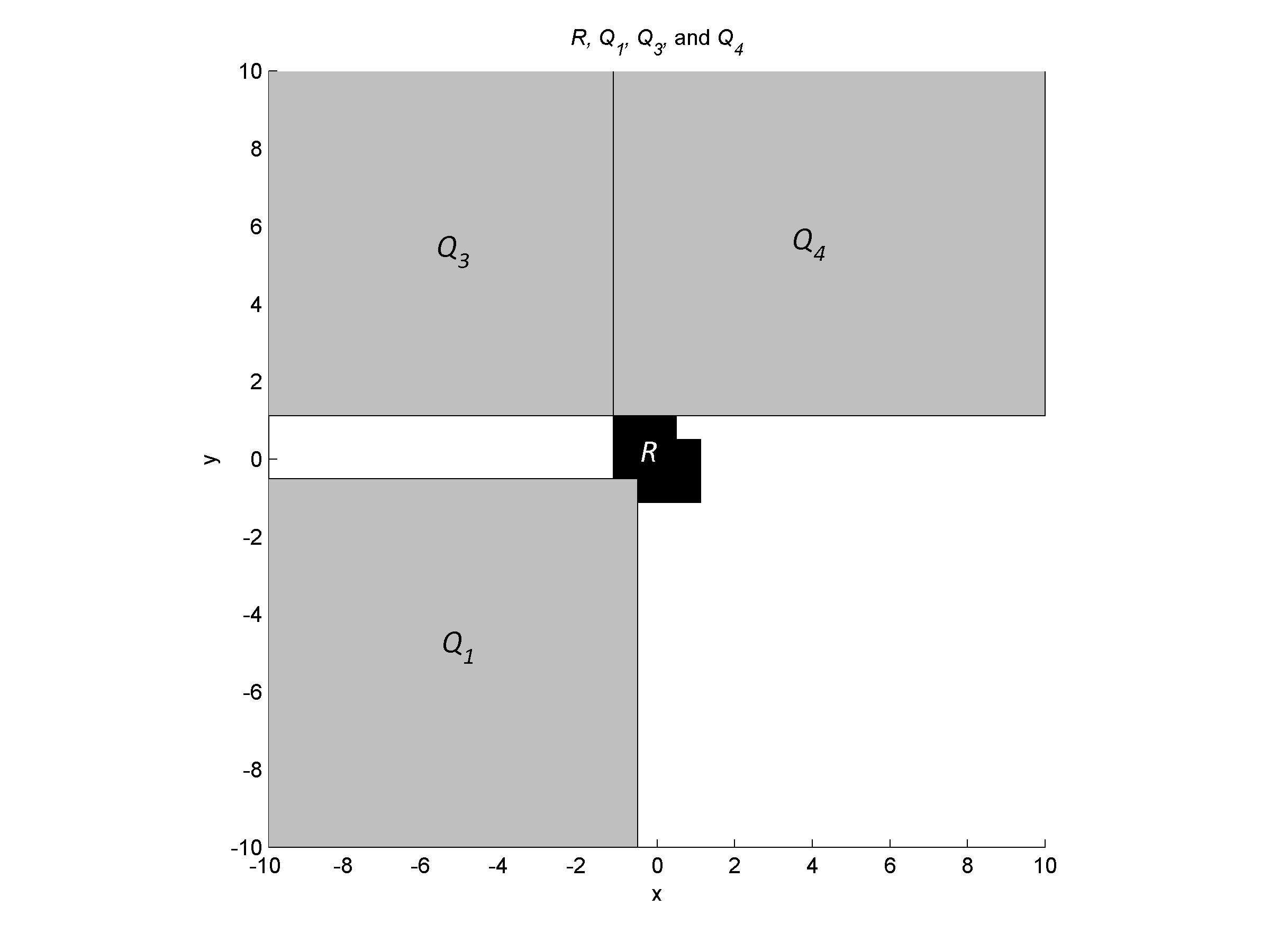}
\caption{R, $Q_1$, $Q_3$, $Q_4$}
\end{subfigure}
\quad
\begin{subfigure}[h]{0.28\textwidth}
\centering
\includegraphics[trim=16cm 0mm 0mm 0mm,clip=true,keepaspectratio=true,scale=0.075]{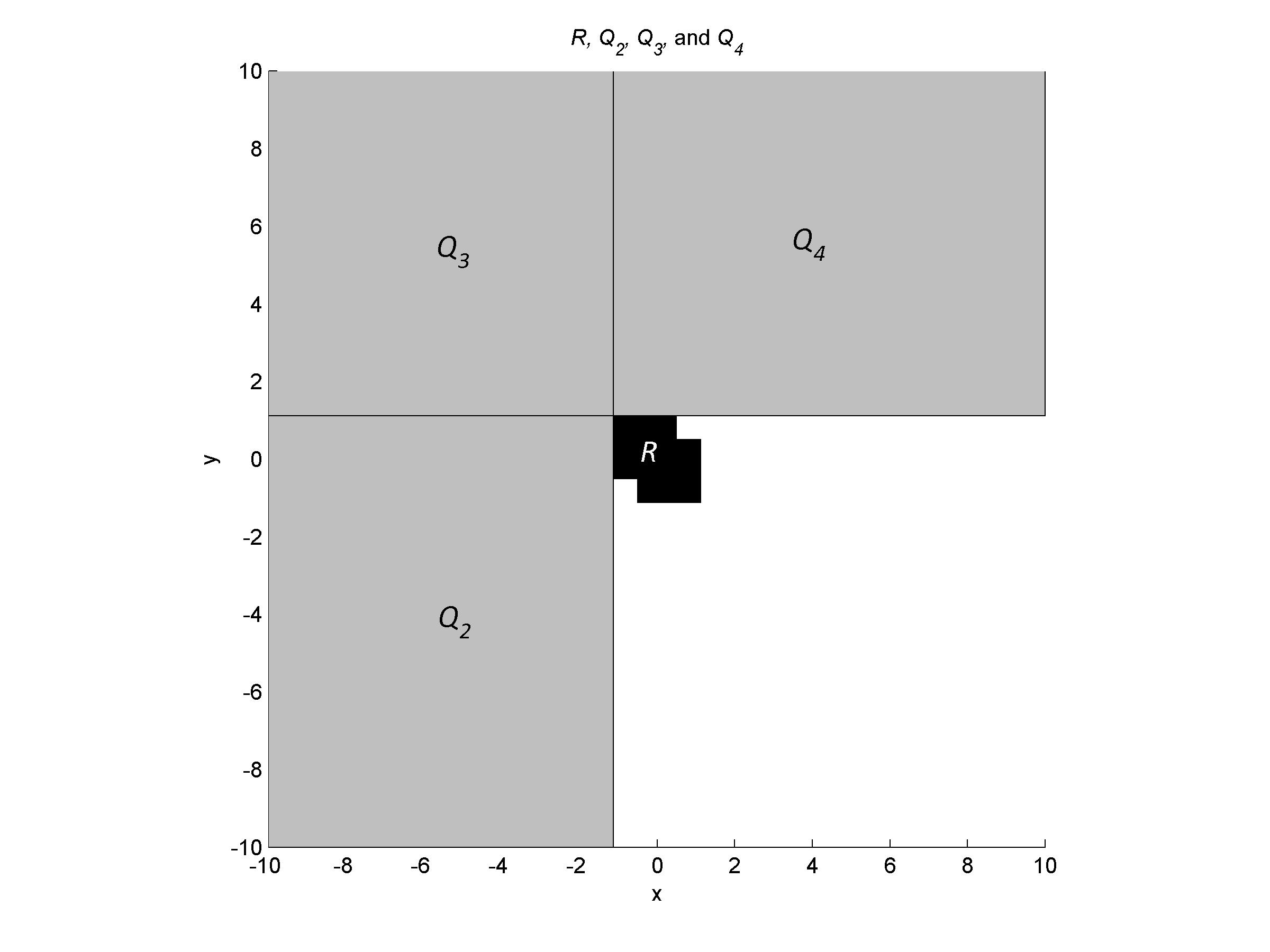}
\caption{R, $Q_2$, $Q_3$, $Q_4$}
\end{subfigure}
\begin{subfigure}[h!]{0.28\textwidth}
\centering
\includegraphics[trim=10cm 0cm 10cm 0mm,clip=true,keepaspectratio=true,scale=0.075]{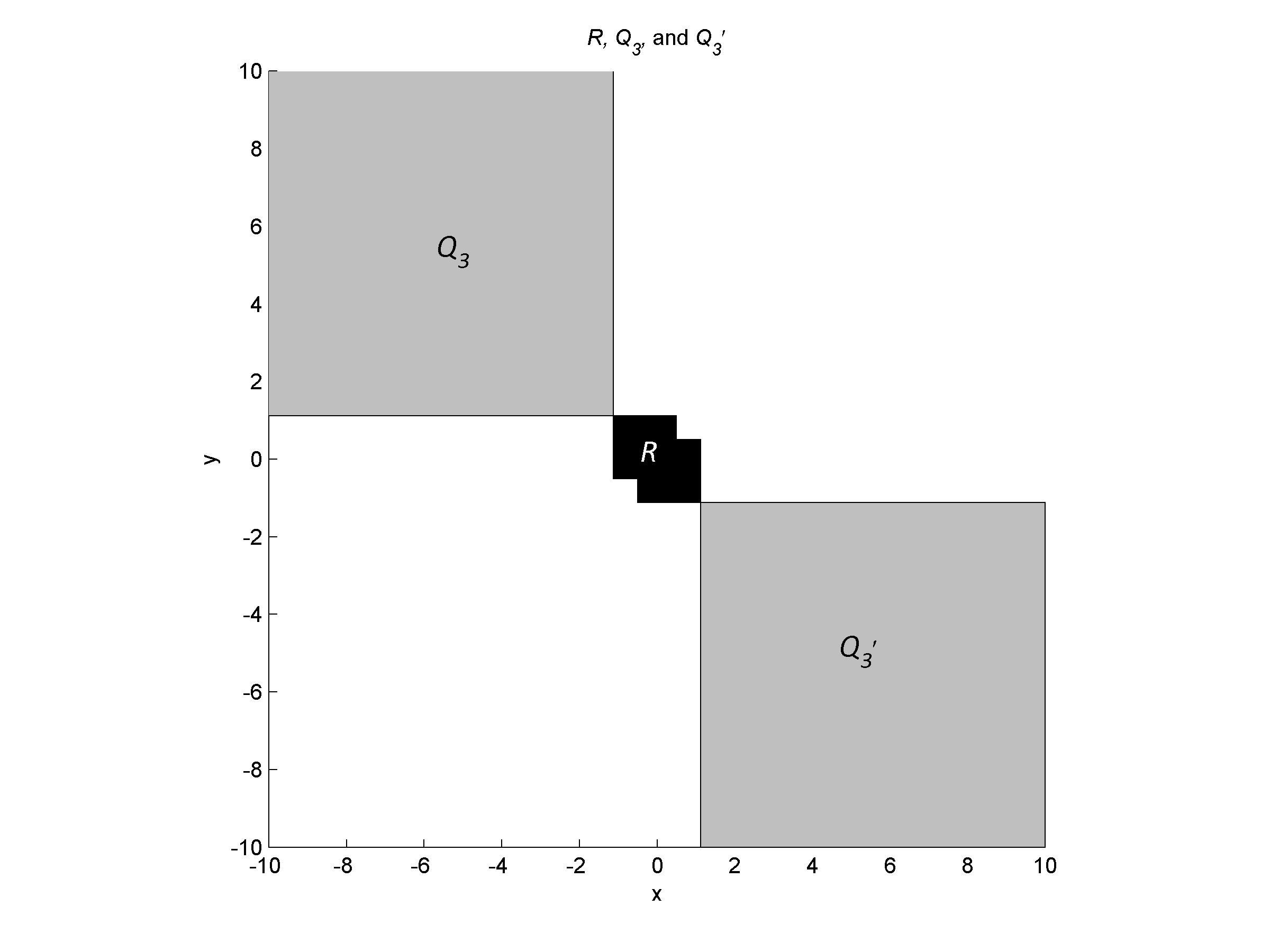}
\caption{R, $Q_3$, $Q'_3$}
\end{subfigure}
\caption{Partition of $\R$}
\end{figure}

It will be shown that  $f^n(x,y) \to \infty$ if $(x,y)$ is in $Q_3\setminus\{ p\}$ and $f^{-n}(x,y) \to \infty$ when $(x,y)$ is in $Q_1, Q_2,$ or $Q_4$ but is not $p$ or $ p_-$.  If we interchange $Q_k$ and $Q'_k$, the corresponding statements hold, since $f \circ \sigma= \sigma \circ f$ and $f^{-1} \circ \sigma = \sigma \circ f^{-1}$.  
The quadrants are mapped as follows: \[
f^{-1}(Q_1) \subset Q_1, f^{-1}(Q_2) \subset Q_4 ', f(Q_3) \subset Q_3',  f^{-1}(Q_4) \subset Q_2'
\]

\begin{figure}[H]
\begin{subfigure}[h]{0.28\textwidth}
\centering
\includegraphics[trim=16cm 0mm 5cm 0mm,clip=true,keepaspectratio=true,scale=0.075]{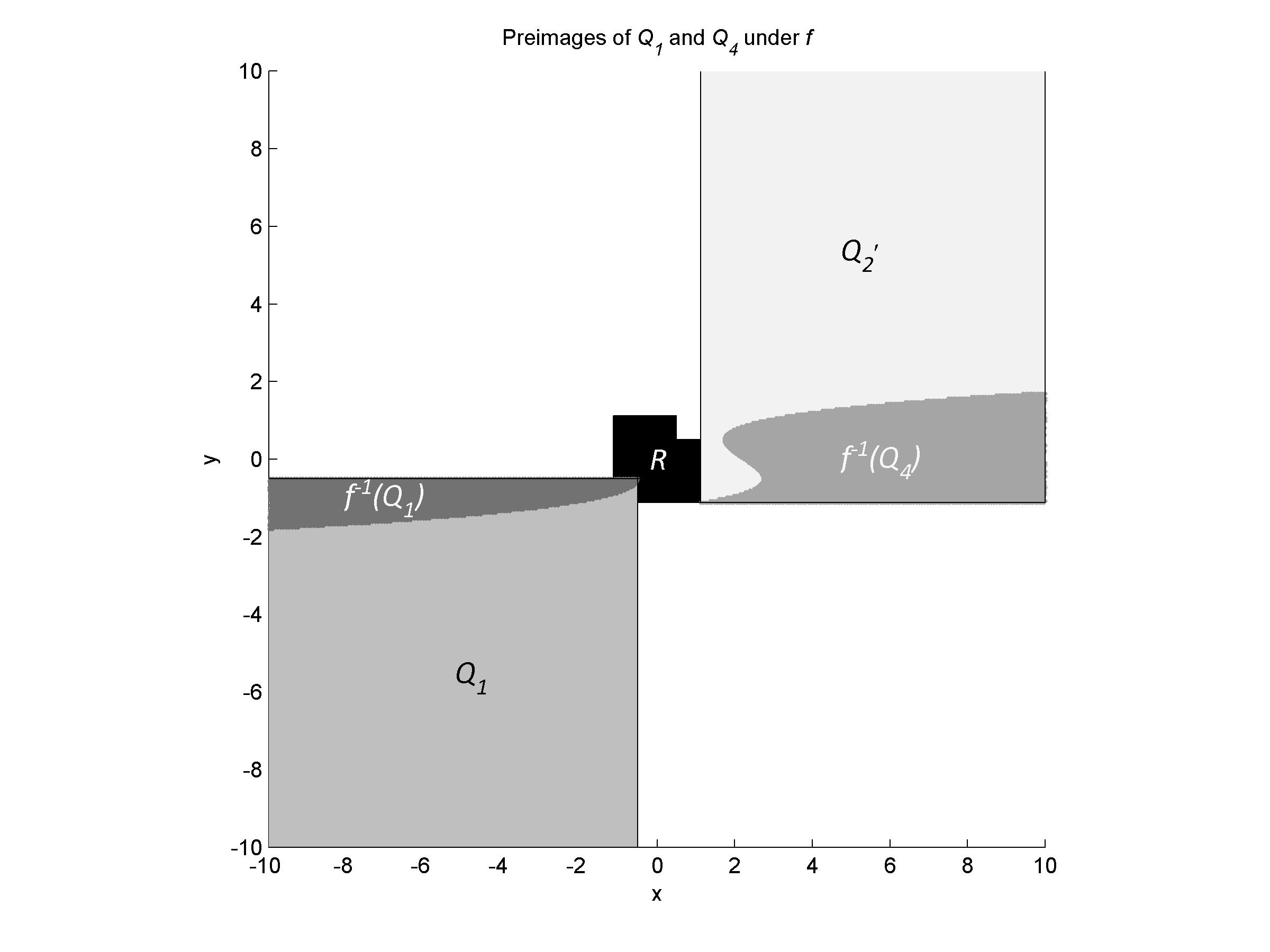}
\caption{$f^{-1}(Q_1)$ and $f^{-1}(Q_4)$}
\end{subfigure}
\quad
\begin{subfigure}[h]{0.28\textwidth}
\centering
\includegraphics[trim=13cm 0mm 0mm 0mm,clip=true,keepaspectratio=true,scale=0.075]{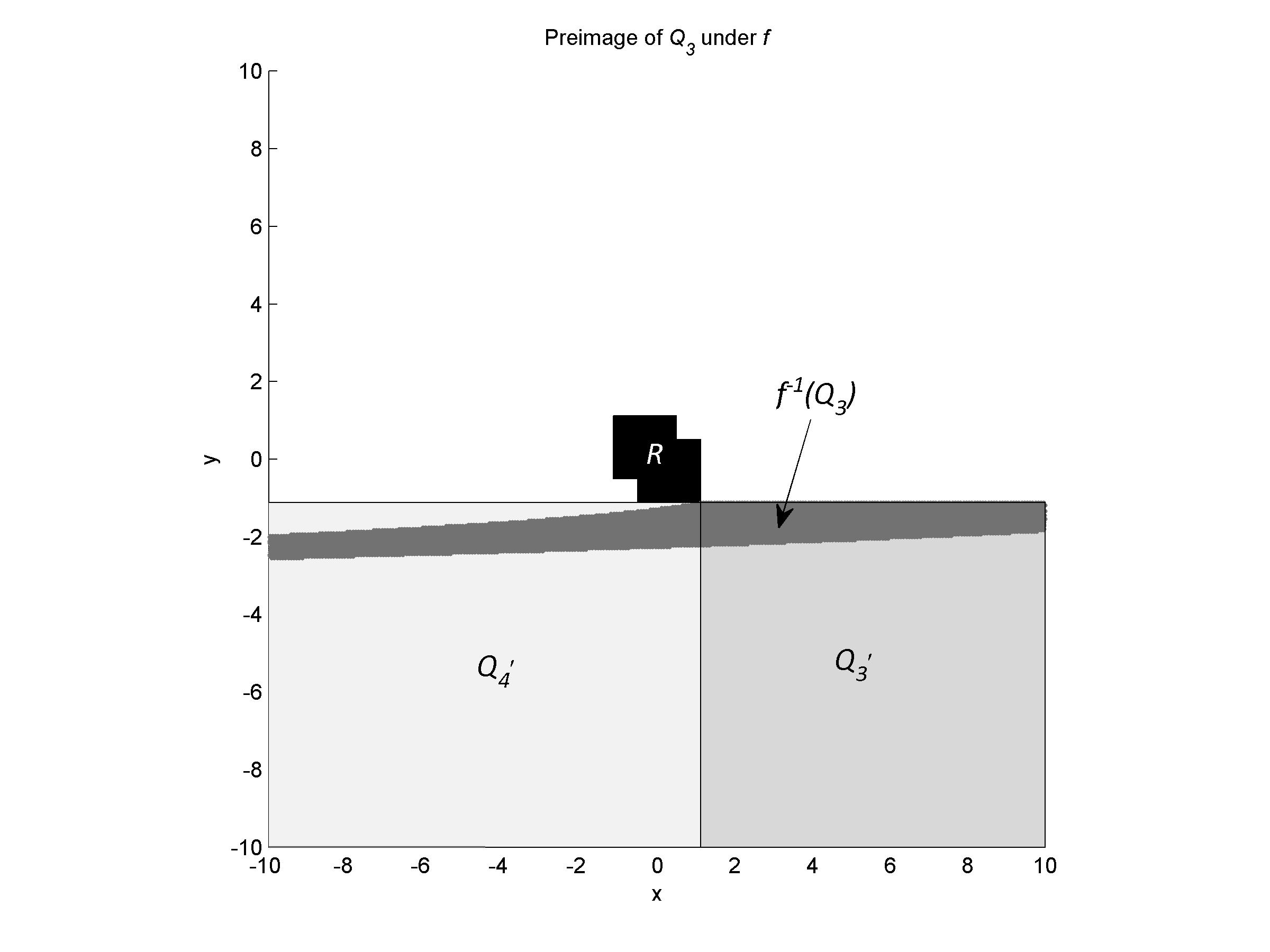}
\caption{$f^{-1}(Q_3)$}
\end{subfigure}
\begin{subfigure}[h!]{0.28\textwidth}
\centering
\includegraphics[trim=10cm 0cm 2mm 0mm,clip=true,keepaspectratio=true,scale=0.075]{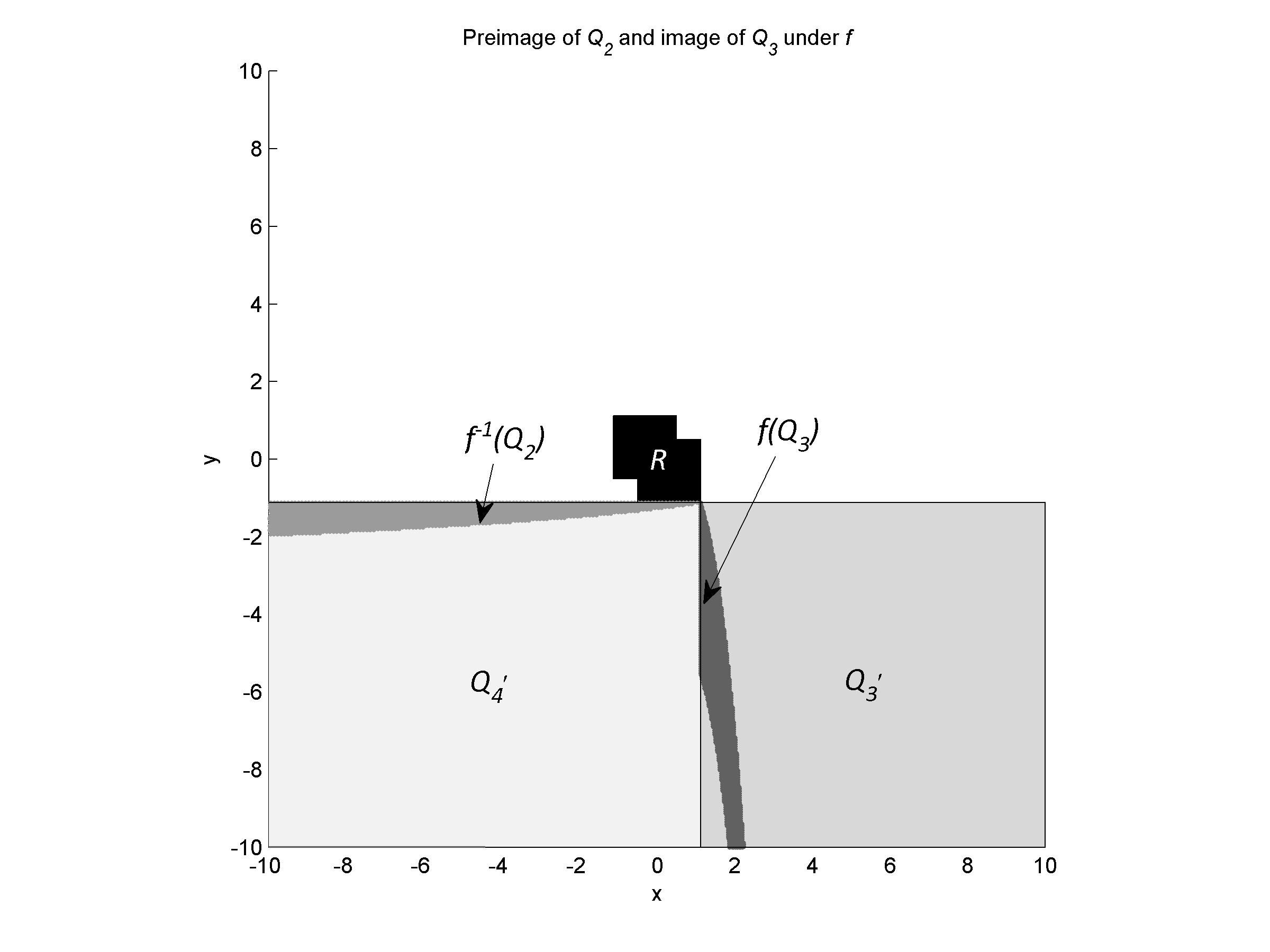}
\caption{$f^{-1}(Q_2)$ and $f(Q_3)$}
\end{subfigure}
\caption{Mappings of the partitions of $\R$}
\end{figure}

We will first consider $Q_3$. If $(x,y)$ lies in $Q_3$ but $y \not= \frac{\sqrt{5}}{2}$, then its image under $f$ lies further away from the origin with respect to the pseudonorm $|(x,y)|= |y - \frac{x}{2}|$ which is obviously the $y-$ intercept of the line through $(x, y)$ with slope $\frac{1}{2}$. To see this, notice that  $|(x, y)| = y - \frac{x}{2}$ if $(x, y) \in Q_3$ and when $(x, y) \in Q'_3$, then $|(x, y)| = \frac{x}{2} - y$. Since $f(Q_3) \subset Q'_3$, if $(x, y) \in Q_3$ with  $y \not= \frac{\sqrt{5}}{2}$, then

\[
|f(x,y)| - |(x,y)| = -\frac{x}{2} + y^3 -\frac{y}{4} - y + \frac{x}{2} = y(y^2 - \frac{5}{4}) >0.
\] 

Using the fact that the pseudonorm is  preserved by the reflection $\sigma$ at the origin, it immediately follows that when $(x, y)  \in Q'_3\setminus\{p'\}$, then $|f(x, y)| - |(x, y)| > 0$ if $y \neq -\frac{\sqrt{5}}{2}$. 

If $ (x, y) \in Q_3\setminus \{p\}$ with $y \neq \frac{\sqrt{5}}{2}$, then $y_1 \neq -\frac{\sqrt{5}}{2}$ for $f(x, y) = (x_1, y_1) \in Q'_3\setminus \{p'\}$  and  
\[
|f^2(x, y)| > |f(x, y)| > |(x, y)|.
 \]
The same inequalities hold for $(x, y) \in Q'_3 \setminus \{p'\}$ with $y \neq -\frac{\sqrt{5}}{2}$ due to the properties of $\sigma$.
By induction, for a point $(x,y)$ in $( Q_3 \cup Q'_3) \setminus \{p, p'\}$ with $|y| \neq \frac{\sqrt{5}}{2}$  the sequence $(|f^n (x,y)|)_{n \in \mathbb{N}}$ is strictly  monotonically increasing.\\
Whenever $| y| = \frac{\sqrt {5}}{2}$, after one iterate $|y_1| \neq \frac{\sqrt{5}}{2}$ for $f(x, y) = (x_1, y_1)$.  Consequently,  $(|f^n (x,y)|)_{n \geq 1}$ is  strictly  monotonically increasing for all points $(x,y)$ in $( Q_3 \cup Q'_3) \setminus \{p, p'\}$. That sequence is also unbounded; otherwise, it would converge to some value r where $|a|=r$ for every accumulation point $a$ of $(f^n(q))$.  Because $Q_3$ is closed, $a \in Q_3$. This is a contradiction, since then $|f(a)| > |a|$ would follow, contradicting the fact that $f(a)$ is also an accumulation point in $Q_3$ and therefore  $|f(a)| =|a|$.  It follows that $f^n(x,y) \to \infty$ as $n \to \infty$ for $(x,y) \in Q_3  $.

Now consider $Q_1$. If  $(x,y) \in Q_{1}$, then $f^{-1}(x,y) \in Q_{1}$.
Using the pseudonorm $\lvert (x,y) \rvert = \lvert y+\frac{x}{2} \rvert$ in $Q_1$,   we see that for $(x,y) \in Q_{1}$,

\[
\lvert (x,y) \rvert  = -y -  \frac{x}{2} \quad and  \quad \lvert f^{-1}(x,y) \rvert =\lvert (x_{-1},y_{-1}) \rvert = -y_{-1} - \frac{ x_{-1}}{2} = -x - x^{3} + \frac{3x}{4} - y = - \frac{x}{4} - x^3 - y.
\]

Therefore,
\[
\lvert f^{-1}(x,y) \rvert - \lvert (x,y) \rvert = -x - x^{3} + \frac{3x}{4} - y +y + \frac{x}{2} = -x^{3} + \frac{x}{4}  = -x (x^2 - \frac{1}{4}) .
\]

This difference is positive for $ x < -\frac{1}{2}$.  If  $x =  -\frac{1}{2}$ and $(x, y)\in Q_1$, then $f^{-1}(x,y) = (x_{-1}, y_{-1})$ satisfies $x_{-1} < - 1/2$ if and only if $y < -\frac{1}{2}$, implying by induction that for $(x, y) \in Q_1 \setminus p_-$ the sequence $(|f^{-n}(x,y)|)_{n \geq 1}$ is strictly monotonically increasing.  As above, it is unbounded and   $f^{-n}(x,y)\rightarrow \infty$ as $n \rightarrow \infty$ for $(x,y) \in Q_{1}\setminus p_-$ follows.

Finally, we will consider $Q_2$ and $Q_4$. Note that $Q_2$ and $Q_1$ overlap, and once any backward iterate of a point $(x, y)$ in $Q_2$ lands in $Q_1$, then its fate is sealed and  $f^{-n}(x,y)\rightarrow \infty$ as $n \rightarrow \infty$.  Thus, we only need to consider points  $(x,y) \in Q_{2} $ such that $f^{-n}(x, y) \notin Q_1$ for every $n$.

Using the maximum norm $\lvert (x,y) \rvert = max\{  \lvert x \rvert, \lvert y \rvert \}$, we arrive at the following:
If $ (x,y) \in Q_{2} \setminus Q_{1}$, then $x\leq -\frac{\sqrt{5}}{2},   -\frac{1}{2} \leq y \leq \frac{\sqrt{5}}{2}$ and $ \lvert (x,y) \rvert =\lvert x \rvert = - x.$  If  $(x, y) \in Q'_4 \setminus Q_1$, then  $ -\frac{1}{2} \leq x \leq \frac{\sqrt{5}}{2} ,  y \leq -\frac{\sqrt{5}}{2}$ and $ \lvert (x,y) \rvert =\lvert y \rvert = - y.$ Therefore, when  $(x, y) \in Q_2 \setminus Q_1$  and $f^{-1}(x, y) \in Q'_4 \setminus Q_1$, the difference\\
\[
\lvert f^{-1}(x,y) \rvert - \lvert (x,y) \rvert =-x +x = 0.
\]

However, when $(x, y) \in Q'_4 \setminus Q_1$, then it is no restriction to assume that   $f^{-1}(x, y) \in Q_2 \setminus Q_1$,  and  the difference \\
\[
\lvert f^{-1}(x,y) \rvert - \lvert (x,y) \rvert = -2x^3 +\frac{3}{2}x - y
\]
is positive if $ y <  -\frac{\sqrt{5}}{2}$.

Consequently, $ \lvert f^{-2(n+1)}(x,y) \rvert > \lvert f^{-2n}(x,y) \rvert$  and  $|f^{-(2n+1)}(x,y)| > | f^{-(2n-1)}(x,y)|$ for $n \in \mathbb{N}$ and $(x, y) \in Q_2 \setminus Q_1$ with  $ y <  -\frac{\sqrt{5}}{2}$. That means the sequence of the absolute values of the backward images is itself not strictly monotonically increasing, but the subsequence of the absolute values of the odd inverse images as well as  the subsequence of the absolute values of the even inverse images are both strictly monotonically increasing and therefore unbounded, implying that $f^{-n} (x, y) \to \infty $.

\end {proof}

 \textbf{Corollary 2.2}  The backward orbit of every point $q$ on the boundary of $R$ which is not $p, p', p_+$  or  $p_-$  escapes, i.e. $f^{-n}(q) \to \infty$.

\begin{proof}
Since $q$ lies in one of $Q_1, Q_2, Q_4  $ or their reflections, Lemma 2.1 gives the result.
\end{proof}

The orbit behavior inside $R$ will be studied in two steps. All points outside the  open unit square $S = \{(x, y)\in \R^2 : | (x, y) | < \frac{1}{2}\} $ will be denoted by $T$ and will be considered first;  $| (x, y)|$ denotes the maximum norm for points $(x, y)$ in $T$. 

 We subdivide $T$ as follows.   Let 
\[T_{1}= \{(x,y) \in T : -y \leq x \leq \frac{1}{2}, \frac{1}{2}  \leq y   \leq  \frac{\sqrt{5}}{2}\}, \qquad T_{2}= \{(x,y) \in T :  -\frac{\sqrt{5}}{2}  \leq x \leq -y,  \frac{1}{2}  \leq  y   <  -x \}
\]
\[
T_{3}= \{(x,y) \in T :\frac{-\sqrt{5}}{2}  \leq x \leq  \frac{-1}{2}, \frac{-1}{2}  \leq  y   \leq  \frac{1}{2}  \}
\]\\
As before,  $\sigma (x, y) = (-x, -y)$  denotes the reflection at the origin, and  for each $i \in \{1, 2, 3\}$, $\sigma(T_i)= T_i '$.  It is clear that $T_{+} \cup T_{-} = T$ where $T_{+} = \bigcup_{i=1} ^{n=3}T_{i}$ and $T_{-} = \bigcup_{i=1} ^{n=3}T_{i}'$. \\

\begin{figure}[H]
\begin{subfigure}[H]{0.28\textwidth}
\centering
\includegraphics[trim=16cm 0mm 0mm 0mm,keepaspectratio=true,scale=0.075]{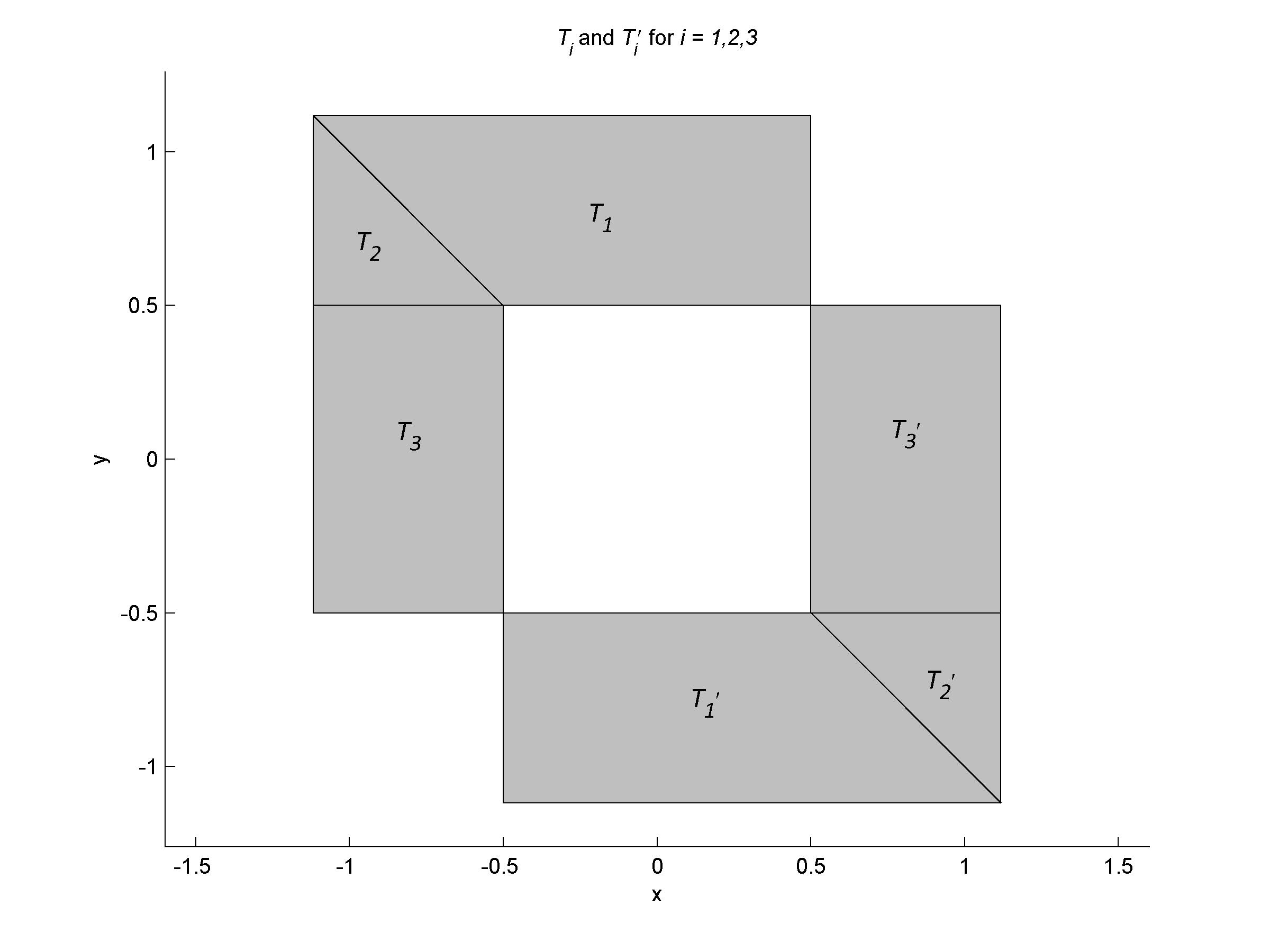}
\caption{$T_i$ and $T'_i$ for $i=1,2,3$}
\end{subfigure}
\quad\quad
\begin{subfigure}[H]{0.28\textwidth}
\centering
\includegraphics[trim=10cm 0mm 0mm 0mm,clip=true,keepaspectratio=true,scale=0.075]{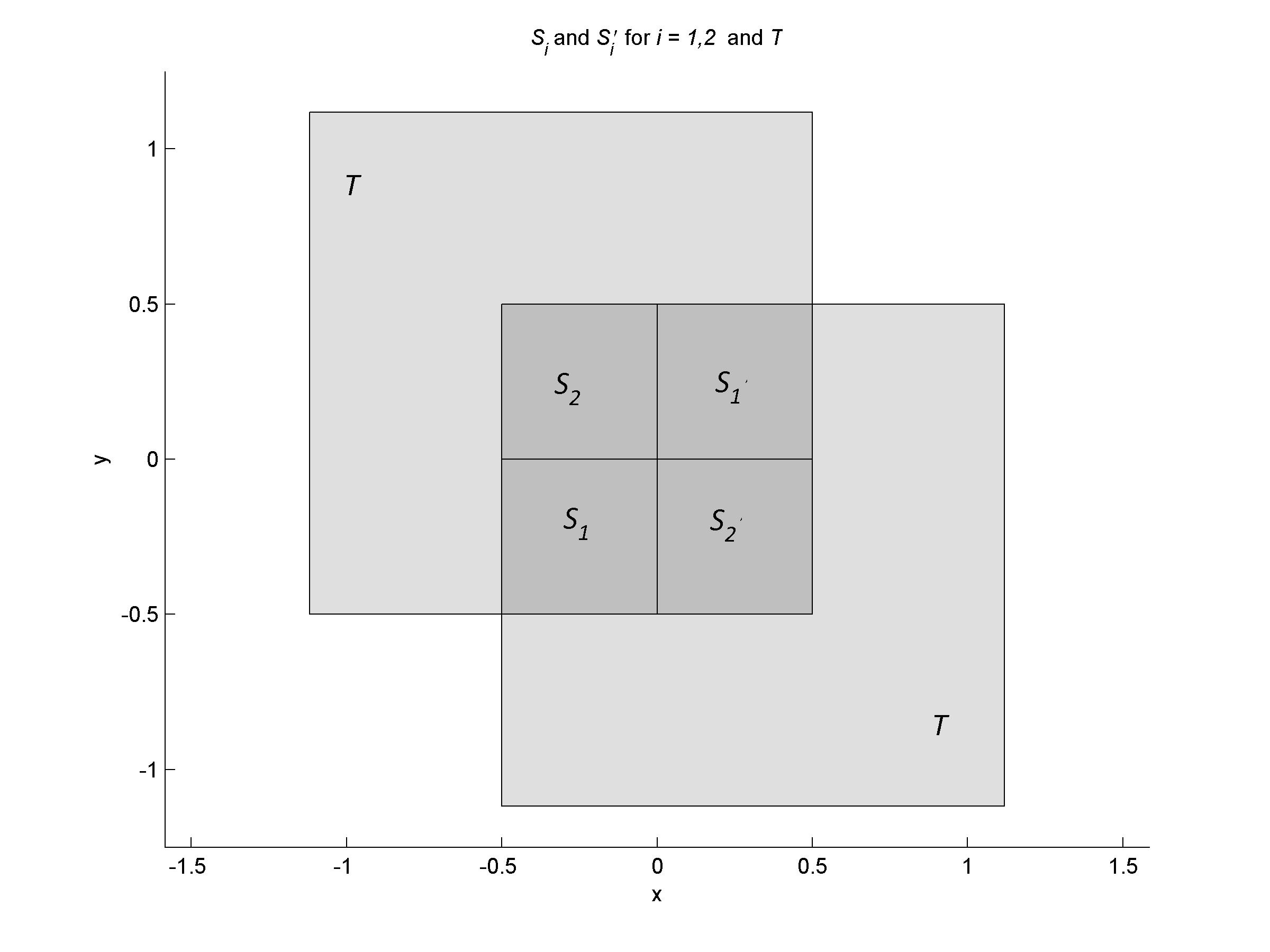}
\caption{$S_i$ and $S'_i$ for $i=1,2$}
\end{subfigure}
\caption{Partition of $R$}
\end{figure}

 We show now that the following mapping properties hold: \\
\[
  f(T_1\setminus \{p\}) \subset T_2' \cup T_3', \quad f(T_{2}) \subset T_2' \cup T_3', \quad   f(T_3 ) \subset S \cup T_1' ,\quad f(S) \subset S , \quad  f(\bar S) \subset \bar S, \quad f^2(\bar S ) \subset S\cup \{p_+, p_-\}
\]\\
from which it follows that  $f $ is forward invariant, i.e.  $f(R) \subset R$. The mapping properties result from the simple fact that  $-\frac{1}{4} \leq g(y) \leq \frac{1}{4}$ for the function $g(y) = -y^3 + \frac{3}{4}y$.\\

\begin{figure}[H]
\begin{subfigure}{0.28\textwidth}
\centering
\includegraphics[trim= 10cm 0mm 0mm 0mm,clip=true,keepaspectratio=true,scale=0.07]{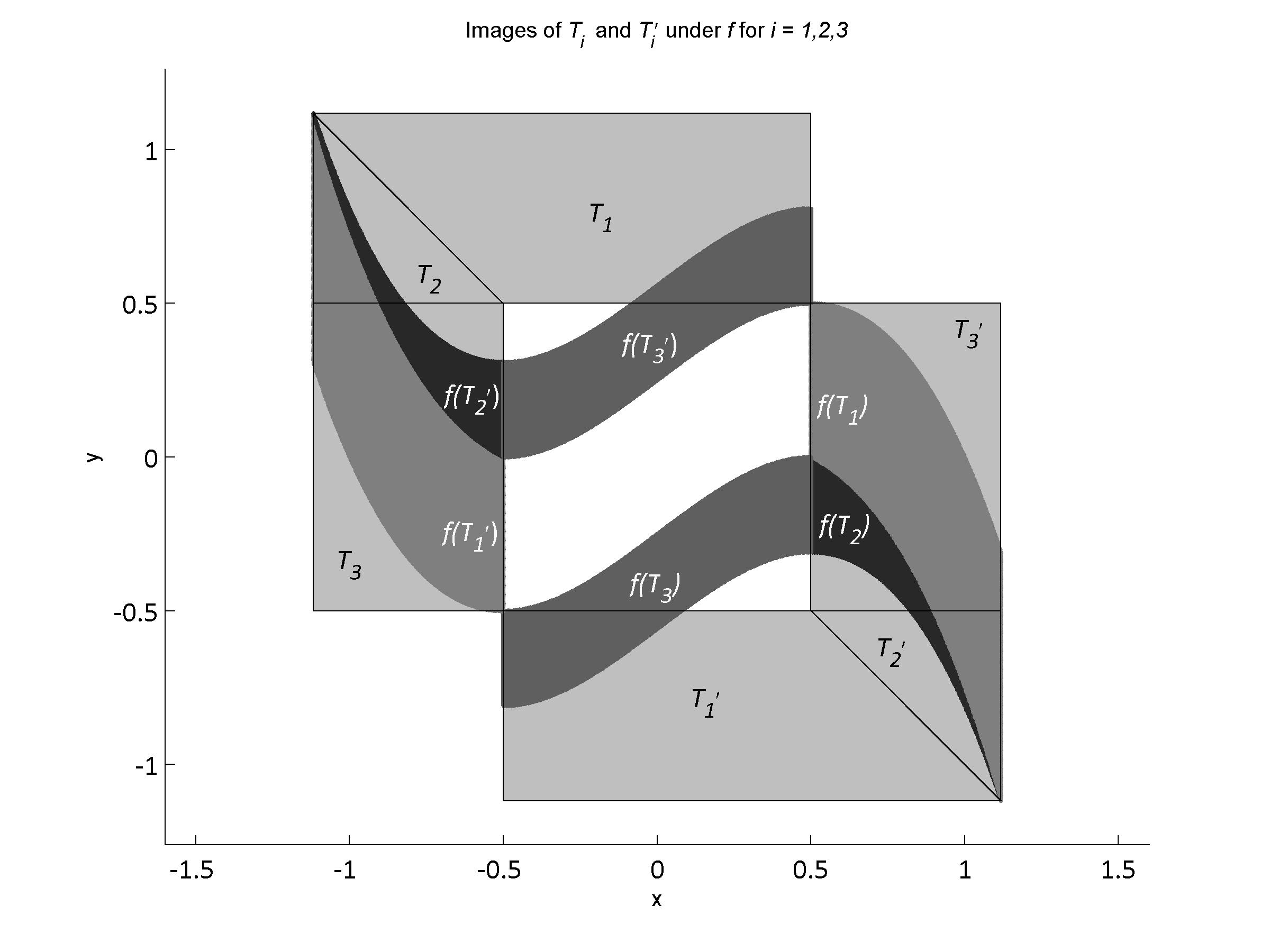}
\caption{$f(T_i)$ and $f(T'_i)$ for $i=1,2,3$}
\end{subfigure}\quad\quad
\begin{subfigure}{0.28\textwidth}
\centering
\includegraphics[trim= 7cm 0mm 0mm 0mm,clip=true,keepaspectratio=true,scale=0.07]{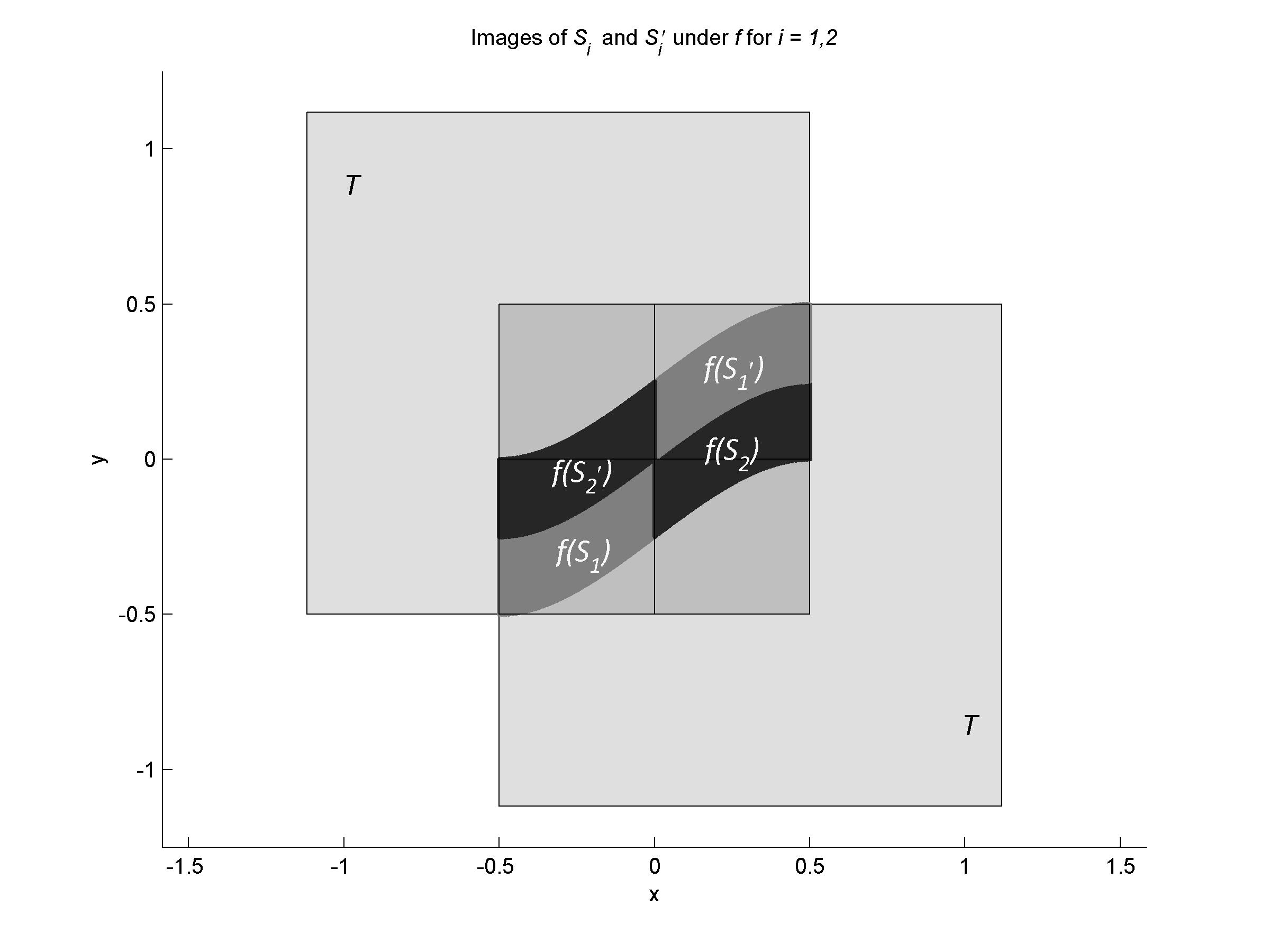}
\caption{$f(S_i)$ and $f(S'_i)$ for $i=1,2$}
\end{subfigure}
\caption{Forward Images of the Partition of $R$}
\end{figure}

 \textbf{Proposition 1}  $f(S)\subset S$ and $ f(\bar S) \subset \bar S$.

 \begin{proof} Since $S = (-\frac{1}{2},\,\frac{1}{2}) \times (-\frac{1}{2},\,\frac{1}{2})$ and $ f(x, y) = (x_1, y_1) = ( y, \frac{x}{2} + g(y))$, to show  $f(S)\subset S$ we need only note that $| y_1|<\frac{1}{2} $.  This implies $\overline{f(S)} \subset \bar S$. Futhermore, $S=f^{-1}(f(S) \subset f^{-1}(\overline{f(S)})$, which is a closed set since the pre-image of a closed set is closed under a continuous map.  Thus, $\bar S \subset f^{-1}(\overline{f(S)})$ which yields  $f(\bar S) \subset \overline{f(S)} \subset \bar S$.
\end{proof}

\textbf{Proposition 2} $f^2(\bar S) \subset S \cup \{p_+,\,p_-\}$

\begin{proof}  $f( S) \subset S$ implies that $f^2(S)  \subset S $.  Therefore,  we need only show that $f^2$ maps $\partial S$ into $ S \cup \{p_+,\,p_-\}$.  The boundary of $S$ is composed of four line segments: 
$$\ell_1 = \{ (x, y): |x| \leq \frac{1}{2}, y = \frac{1}{2}\}, \qquad \ell_2 = \{(x,y): |x| \leq \frac{1}{2},  y= - \frac{1}{2}\}$$
$$\ell_3 = \{(x,y):|y| \leq \frac{1}{2},  x= - \frac{1}{2}\},  \qquad \ell_4 = \{(x,y): |y| \leq \frac{1}{2}, x =  \frac{1}{2}\}$$
Denote $f(x, y) = (x_1, y_1) = (y, \frac{x}{2} + g(y))$. If $(x,y) \in \ell_1$ , then $x_1= \frac{1}{2}$ and $0\leq y_1\leq\frac{1}{2}$. Let $\ell'_4 = \{ (\frac{1}{2}, y): 0\leq y \leq\frac{1}{2}\}$. Thus, $f(\ell_1) \subset \ell'_4 \subset \ell_4$. When $(\frac{1}{2}, y) \in \ell'_4\setminus \{p_+\}$, then $0\leq y < \frac{1}{2}$ and  $\frac{1}{4}  \leq y_1 < \frac{1}{2}$, which means that $f(\ell'_4\setminus \{p_+\}) \subset S$ and therefore $f^2(\ell_1\setminus \{p_+\} )\subset S$. Since $\sigma (\ell_1) = \ell_2$, it follows that $ f^2(\ell_2\setminus \{p_-\}) \subset S$.\\
The images of $\ell_3$ and $\ell_4$ behave differently under $f$, namely except for the corners $(-\frac{1}{2}, \frac{1}{2})$ and $p_-$, they land in $S$ after one iteration. To see this, consider $\ell''_4 = \{ (\frac {1}{2}, y) : -\frac{1}{2} < y <0 \}$. If $(\frac{1}{2}, y) \in \ell'_4 $, then $g(y) <0$  and $ -\frac{1}{2} <  y_1  < \frac{1}{4} $, which implies $f(\ell''_4\setminus \{(\frac{1}{2}, -\frac{1}{2})\}) \subset S$.  Since $f^2(\frac {1}{2}, -\frac{1}{2}) \in S$, it follows that $f^2(\ell_4\setminus \{p_+\}) \subset S$. Because $\sigma (\ell_4) = \ell_3$, we have $ f^2(\ell_3 \setminus \{p_-\}) \subset S$ and Proposition 2 follows.

\end{proof}

\begin{center}
\begin{minipage}{0.4\textwidth}
	\centering
\begin{figure}[H]
	\centering
	\includegraphics[width=0.9\textwidth]{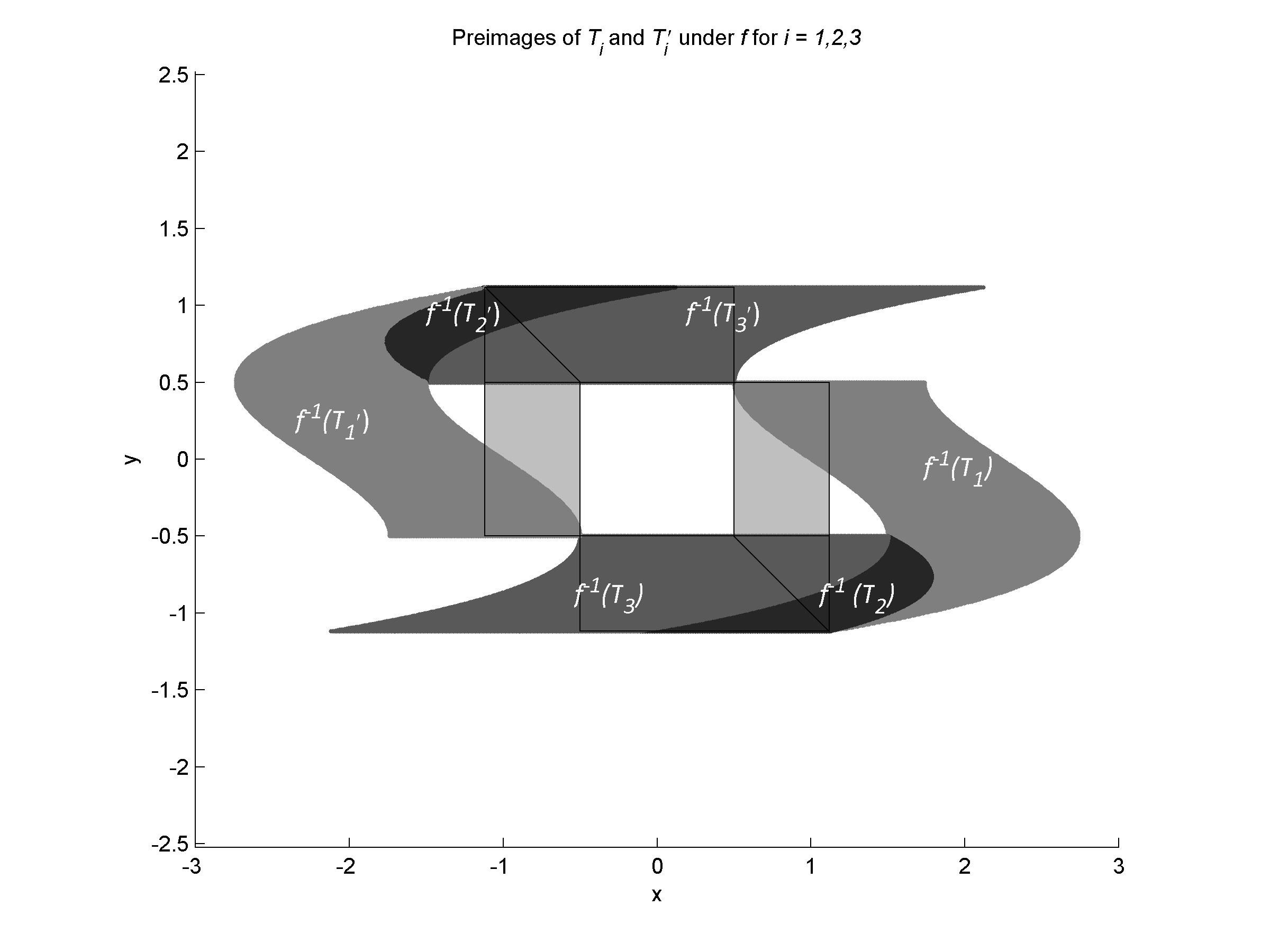}
	\caption{$f^{-1}(T_i)$ and $f^{-1}(T'_i)$ for $i=1,2,3$}
\end{figure}
\end{minipage}
\end{center}

\textbf{Proposition 3}  $f(T_1 \setminus  \{p\}) \subset T_2' \cup T_3 ' $,  \qquad $f(T_2) \subset T_2' \cup T_3 '$, \qquad $f(T_3) \subset S \cup T_1'$
\begin{proof}
Let $(x_1, y_1)= f(x,y)$. If $(x,y) \in T_1 \setminus \{p\}$,  obviously $x_1 =y  \in [\frac{1}{2}, \frac{\sqrt{5}}{2}]$.  It will  be enough to show that  $  -x_1   < y_1= \frac{x}{2} + g(y) \leq \frac{1}{2}$. Now $y_1\leq \frac{1}{2}$,  because $x \leq \frac{1}{2} $ and $g(y) \leq \frac{1}{4}$. For the lower bound, since  $-y \leq x$, we have $ y_1  \geq - \frac{y}{2}+ g(y) \geq -\frac{y}{2} + \frac{1}{4} > - y$ if and only if $y> \frac{1}{2}$. However, when $y = \frac{1}{2}, y_1 \geq -\frac{1}{4} + \frac{1}{4} = 0 > -y = -\frac{1}{2}$. $> -y$ when  $-y(y^2 - \frac{5}{4}) > 0$. That inequality is true for $y \in [\frac{1}{2},\frac{\sqrt{5}}{2}) $  but not for $ y = \frac {\sqrt{5}}{2}$. However, for $ y = \frac {\sqrt{5}}{2}$, $y_1 > -\frac{\sqrt{5}}{2}$ if and only if $x > -\frac{\sqrt{5}}{2}$, implying that $( x, y) $ cannot be $p$. Thus, $f(T_1 \setminus \{ p\}) \subset T_2' \cup T_3 '$. \\
Let $(x,y) \in T_2$.  Then, $ -\frac{\sqrt{5}}{2}\leq x \leq -y$ and $ \frac{1}{2} \leq y < -x$. Obviously, $\frac{1}{2} \leq  x_1 \leq \frac{\sqrt{5}}{2}$.   Next, we verify that $ -x_1 <  y_1 \leq \frac{1}{2}$.  Note that $ y_1 \geq -\frac{\sqrt{5}}{4} - y(y^2 - \frac{3}{4})\geq -\frac{\sqrt{5}}{4} -  \frac{y}{2} \geq -y $, because $|y^2 - \frac{3}{4}| \leq \frac{1}{2}$ and $y \geq \frac{\sqrt{5}}{2}$.  Therefore $(x_1, y_1) \in  T_2' \cup T_3 '$.\\
Let $(x,y)  \in T_3 $. Then $-\frac{\sqrt{5}}{2} \leq x \leq -\frac{1}{2}$ and $ |y| \leq \frac{1}{2}$. It suffices to show that   $-\frac{\sqrt{5}}{2} \leq y_1 \leq 0$. But $y_1 \leq -\frac{1}{4} + g(y) \leq 0$ and $y_1 \geq -\frac{\sqrt{5}}{4} + g(y) \geq  -\frac{\sqrt{5}}{4} - \frac{1}{4} \geq -\frac{\sqrt{5}}{2}.$

\end{proof}

The next Lemma treats all forward and all backward orbits of points in $T$.

\textbf{Lemma 2.3} The forward orbit of a point $q$ in $T$ which is not $p$ or $p'$ either eventually lands in $S$ or it stays in $T$ and converges to $p_+$ or $p_-$, i.e. $q \in \Omega_+ \cup \Omega_-$. The backward orbit of a point q in T which is not  $p_+$ or $p_-$ either eventually lands outside $R$ and escapes, i.e. $q\in W^u(\infty)$,  or it remains in T and converges to $\{p,\,p'\}$, i.e. $q\in W^u(p,\,p')$.

\begin{proof}
Note that the maximum norm $|(x, y)|$ is $|y| = y $  for $(x, y) \in T_1$ and $|(x, y)| = |x| = -x  $ for $(x, y) \in T_2 \cup T_3.$ Similar statements hold when $T_i$ is replaced by $T'_i$. Furthermore, when $(x, y) \in T_1$, then $|f(x, y)| = |(x, y)| = |y|$.  When $(x, y) \in T_2, $ then $|(x, y)| = |x| > |f(x, y)| = |y|$, because $|x| = -x , |y| = y$ and $ -x > y$ by definition of $T_2$. 

However, if $f(x, y) \in S$, the next Lemma will treat that  forward orbit. Consequently, after Proposition 3, we need only consider  $(x, y) \in T_3$ with $f(x, y) \in T'_1$. In that case, $|(x, y)| = |x| = -x$  and  $|f(x, y)| = -\frac{x}{2} + y^3 -\frac{3}{4}y $. Thus, $|(x, y)|  >  |f(x, y)| $ if and only if  $g(y) =   y(y^2 - \frac{3}{4}) < -\frac{x}{2} $. However, $ \frac{1}{4} \leq -\frac{x}{2} $ and $g(y) < \frac{1}{4}$ if $y > -\frac{1}{2} $.  When $ y = -\frac{1}{2}$,  then $|(x, y)| > |f(x, y)|$ if and only if $x < -\frac{1}{2}.$ Therefore, if $(x, y) \in T_3 \setminus \{p_-\} $, then $|(x, y)| > |f(x, y)|$. Similarly, if 
$(x, y) \in T'_3 \setminus \{ p_+\} $ with $ f(x, y) \in T_1$, then  $|(x, y)| > |f(x, y)|$ . To summarize, for all points $(x, y) \in T $ whose forward orbit remains in $T$, the sequence $(| f^n(x, y)| )_n$  is monotonically decreasing. Furthermore, the sequence of the norms of all even forward iterates $(|f^{2n}(x, y)|)_n$ is strictly monotonically decreasing for every point  $(x, y) \in T \setminus \{p_+, p_-\}$ It remains to show that $f^n(x,y) \to p_+ $ or  $f^n(x,y) \to p_-$.
Since $|f^n(x, y)| \geq \frac{1}{2}$ for every $n\in \mathbb{N}$,  this sequence converges. Let $r= \lim_{n \to \infty} |f^n(x, y)|$. Then $r\geq \frac{1}{2}$. 

We will see now that $r=\frac{1}{2}$. Because $T$ is compact, the forward iterates $f^n(x, y)$ have at least one accumulation point $a$. Then $|a|$ is an accumulation point of the convergent sequence $(|f^n(x, y)|)_n$ and $|a| = r$. Since $f(a)$ and $f^2(a)$ are also accumulation points of $f^n(x, y)$, we have $|a| = |f(a)| = |f^2(a)| = r$. If $r> \frac{1}{2}$, then every accumulation point $a$ would lie in $T$ and the contradiction $|f^2(a)| < |a| = r = \frac{1}{2} $ would follow.\\
Thus, every accumulation point $a$ must lie on the boundary of $S $. The only possible accumulation points however are  $p_+$ and $p_-$, since otherwise $f^2(a) \in S $ as noted above in Proposition 2. The backward invariance of the two basins $\Omega_+, \Omega_-$ implies then that $(x,y)$ must lie in one of them and consequently  $f^n(x,y) \to p_+ $ or  $f^n(x,y) \to p_-$.

Consider  a point q in T which is not  $p_+$ or $p_-$. We will investigate the behavior of the backward orbit  $O^-_f(q)=\{f^{-n}(q):\;n\in \mathbb{N}\}$  of $q$.  For $q\in T$, $O^-_f(q)\cap S=\emptyset$ because otherwise the forward invariance of S (Proposition 1) results in the contradiction $q \in S$.  Thus, either $O^-_f(q)\subset T$ or  there is an $n$ with $f^{-n}(q)$ not in $T$ and therefore not in $R$. If $f^{-n}(q)$  is not in $R$, it cannot be in $Q_3\cup Q'_3$, due to forward invariance, and therefore $f^{-n}(q) \to \infty$ by  Lemma 2.1.

Consider the case  $O^-_f(q)\subset T$.  The sequence $(|f^{-n}(q)|)_n$ will be shown to be monotonically increasing.  If $q= (x, y) \in T_2 \cup T_3$ and $f^{-1}(q) = ( x_{-1}, x)$ then obviously  $|f^{-1} (q)| \geq |x| = |q|$ by the definition of the maximum norm.  If  $q =(x,y)\in T_1$, then $ |q|=|y| = y$ and $f^{-1}(q)\subset T'_3 \cup (\mathbf{R^2}\setminus R)$. 
When $f^{-1}(q) \in T'_3$, then $|x| \leq \frac{1}{2} $ and $|f^{-1}(q)| = |x_{-1}| = x_{-1} = x( 2x^2 - \frac{3}{2}) + 2y. $ Consequently, $|f^{-1}(q)|  = x_{-1} \geq |q| = y $ if and only if $x(2x^2 - \frac{3}{2}) \geq -y $. But $-y \leq -\frac{1}{2}$,  and thus  $-\frac{1}{2} \leq x(2x^2 - \frac{3}{2})$, since $-x \geq -\frac{1}{2}$ and $ - (2x^2 - \frac{3}{2}) \geq 1$, proving the monotonicity. 

If $a$ is any accumulation point of $(f^{-n}(q))_n$, let $r= |a|$. Obviously, $r \leq \frac{\sqrt{5}}{2}$. Because $f^{-m}(a)$ is also an accumulation point, it follows that $ |f^{-m}(a)| = |a| = r$ for every $m \in \mathbb{N}$.  Hence, $a $ is on the boundary of $R$ 
and $r =  \frac{\sqrt{5}}{2}$, implying that $a$ is not $p_+$ or $p_-$. Due to 2.2, $a$ must be either $p$ or $p'$ proving the claim.\\
 \end{proof}

To treat the orbit behavior inside $S$, subdivide $S\setminus (0,0)$ into 4 open squares
 
 $$S_{1}=\{(x,y)\in  \mathbb{R}^2 : -\frac{1}{2} < x  < 0,\space - \frac{1}{2} < y < 0 \}, \qquad S_{2}=\{(x,y)\in \mathbb{R}^{2} : -\frac{1}{2} < x < 0, 0  <  y < \frac{1}{2} \},$$
$$S'_1 = \sigma (S_1),  \qquad  S'_2 = \sigma (S_2)$$.

The preimages of $S_i$ and $T_i$ under $f$ are depicted below:

\begin{center}
\begin{minipage}{0.4\textwidth}
	\centering
\begin{figure}[H]
	\centering
	\includegraphics[width=0.9\textwidth]{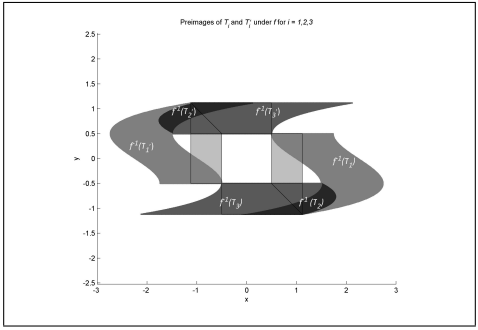}
	\caption{$f^{-1}(T_i)$ and $f^{-1}(T'_i)$ for $i=1,2,3$}
\end{figure}
\end{minipage}\qquad
 \begin{minipage}{0.4\textwidth}
	\centering
\begin{figure}[H]
	\includegraphics[width=0.9\textwidth]{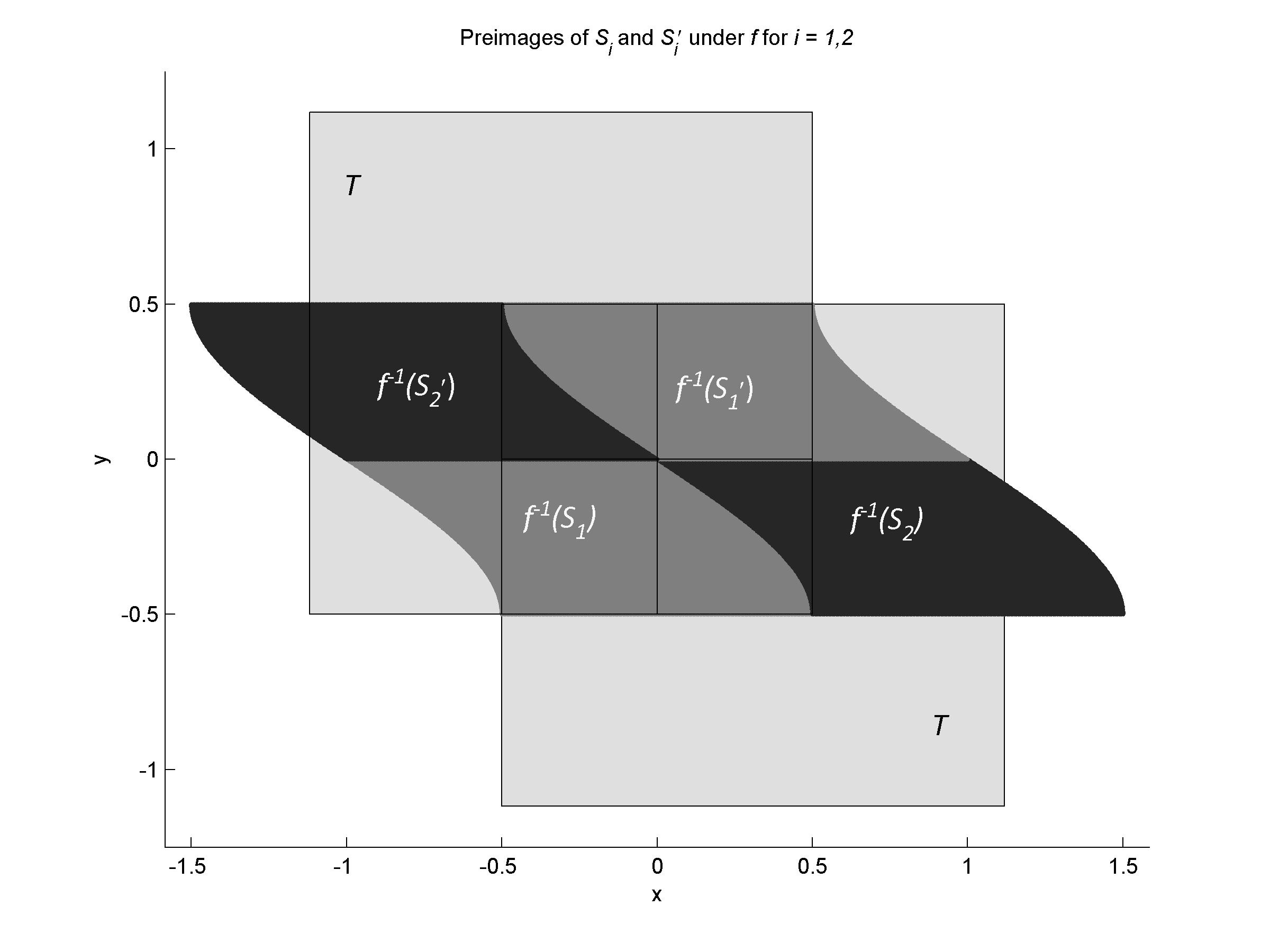}
	\caption{$f^{-1}(S_i)$ and $f^{-1}(S'_i)$ for $i=1,2$}
\end{figure}
\end{minipage}
\end{center}
        
  \textbf{Proposition 4} \qquad $f(S_1) \subset S_1$, \qquad $f(S_2) \subset S'_1 \cup S'_2$\\  
     \qquad $f^{-1}(S_1) \subset S_1 \cup S_2' \cup T_3$, \qquad  $f^{-1}(S_2) \subset S'_2 \cup T'_3 \cup (\mathbb{R}\setminus R)$
 
 \begin{proof}  
Let $(x, y) = S_1$. Clearly, $\frac{x}{2} +y(\frac{3}{4} - y^2) < 0$, since $\frac{3}{4} - y^2 > \frac{1}{2}$. From $-  \frac{1}{2} - \frac{x}{2}  < -\frac{1}{4}  < \frac{y}{2} < y (\frac{3}{4} - y^2) $, it follows that $\frac{x}{2} +y(\frac{3}{4} - y^2) > -\frac{1}{2}$. and $f(x, y) \in S_1$.
If $(x, y) \in S_2$, then $f(x, y) $ is in the right half plane. Due to Proposition 1, $f(S) \subset S$, implying $f(S_2) \subset S'_1 \cup S'_2$.

To see that  $f^{-1}(S_1) \subset S_1 \cup S_2' \cup T_3$,  let $(x,y)$ be a point in $S_1$ and let $f^{-1}(x,y)=(x_{-1},y_{-1})$.  It is clear that $-\frac{1}{2} < y_{-1} = x < 0$.  We  note that $-1  <  x_{-1} = 2y +2x^3 -\frac{3}{2}x < \frac{1}{2}$.  This is because   $-\frac{1}{2} < x < 0$ and $2x^2 - \frac{3}{2} <  -1$ imply $x_{-1} < \frac {1}{2}$ whereas $-1 < 2y < 0 < 2x^3 - \frac{3}{2} x $ implies $ x_{-1} > -1$. 
To prove  $f^{-1}(S_2) \subset S'_{2} \cup T_3 ' \cup (\R \setminus R)$, let $(x,y) \in S_2$.  It will be enough to show that $x_{-1} > 0$  which follows from $x_{-1} \geq x(2x^2 -\frac{3}{2})$ due to $x<0$ and $x^2 <\frac{1}{4}$.

 \end{proof}
 
Obviously,  the corresponding statements are true if $S'_i$ is interchanged with $S_i$ and $T'_3$ with $T_3$.

 The axes inside $S$ are mapped into $S_1 \cup S'_1$ after at most two forward iterations:\\
 
 \textbf{Proposition 5} $f(\{(0, y): 0<y <\frac{1}{2}\}) \subset S'_1, \qquad f(\{(0, y): -\frac{1}{2} < y < 0\}) \subset S_1$,\\
\qquad $ f^2(\{(x, 0): 0<x< \frac{1}{2}\}) \subset S'_1,\qquad  f^2(\{(x, 0): -\frac{1}{2} < x < 0\}) \subset S_1.$
 \begin{proof}
The positive $y-$axis in $S$ is mapped into $S'_1$,  because  $ 0<y <\frac{1}{2}$, implies $0 < y(\frac{3}{4} - y^2) < \frac{1}{2} (\frac{3}{4} - y^2) <  \frac{1}{2}$. The negative $y$-axis in $S$ is mapped into $S_1$ due to $\sigma \circ f = f \circ \sigma$.  The negative $x-$ axis in $S$ is mapped into the negative $y-$ axis in $S$ by $f$ and thus after another iteration it is mapped into $S_1$. Similarly, the positive $x-$ axis in $S$ is mapped into $S'_1$ after two iterations.

 \end{proof}

 All forward and all backward orbits of points in $S$ are treated next:
         
\textbf{Lemma 2.4 }The forward orbit of a point q in S stays in S and converges to either $(0, 0), p_+$ or $p_-$, i.e. $q \in W^{s}(0) \cup \Omega_-\cup \Omega_+$. The backward orbit of $q$ either eventually leaves $S$ for $T$ or it remains in $S$ and converges to the origin i.e. $q\in W^u(0)$. Furthermore, $\Omega_+$ contains $S'_1$ and $S_1$ is in $\Omega_-$.
\begin{proof}  First consider points $q$ in $S_1 \cup S'_1$. We will show that  $f^n(q)  \to p_+$ for $q \in S'_1$, from which $f^n(q) \to p_-$ for $q \in S_1$ follows,  due to $ \sigma \circ f = f \circ \sigma$. Let $q = (x, y) \in S'_1$. Using the pseudonorm $|(x, y)| = y + \frac{x}{2}$,
$$|f(x,y)| - |(x,y)| = \frac{x}{2} - y^3 + \frac{3}{4}y  + \frac{y}{2} - y - \frac{x}{2} = -y(y^2 - \frac{1}{4} ) > 0$$.
By induction,  the sequence $(|f^{n}(q)|)_n$ is strictly increasing. It is obviously  bounded, therefore it must converge.  Let $r $ denote the limit. Let $a$ denote an accumulation point of the forward orbit $(f^n(q))_n$. Then $|a| = r$. Since $f^m(a)$ is also an accumulation point for every $m$, it follows that $|a| = |f^m(a)| = r$ for every $m$, implying that $a$ must be on the boundary of $S'_1$, which means that $a$ is on the boundary of $T$ or on the positive $x-$ or $y-$ axis in $S$. The latter case cannot happen, because then  $a$ would be mapped into $S'_1$ after two iterations by Proposition 5. Hence, $a$ must be in $T$,  and Lemma 2.3 then shows that $f^n(q) \to p_{+}$.

Consider now points $q$ in $S_2 \cup S'_2$. Since forward orbits landing in $S_1 \cup S'_1$ have already been treated, because of Proposition 4 it suffices to look at points  $q \in S_2$ with $f^{2n}(q) \in S_2 $ for all $n\in \mathbb{N}$ and show that $f^{2n}(q) \to (0, 0)$.

Using the pseudonorm  $ |(x, y)| = |y - \frac{x}{2}|$, we have  $|f(q)| - |q|  = y(y^2 - \frac{5}{4}) $ for $q = (x, y)$ which is negative for $q = (x, y) \in S_2 $ with $f(q) \in S'_2$ and for $q\in S'_2 $ with $f(q) \in S_2$.  Therefore, $(|f^{n}(x,y)|)_n$ is strictly monotonically decreasing, as is $(|f^{2n}(q)|)_n$. Let $r = \lim_{n \to \infty}|f^{2n}(q)|$. Then $r \geq 0$. Since $|a| = |f^2(a)| = r $ for every accumulation point $a$ of the forward orbit $(f^{2n}(q))_n$,  $a$ cannot be in $S_2 $ and must be on $ \partial S_2 $. Due to Proposition 5, $a$  must be the origin or on $\partial S_2 \cap \partial S$. But if $a = (x, \frac{1}{2}), -\frac{1}{2} \leq x \leq 0$, then $|a| \geq \frac{1}{2},$ implying that $a$ cannot be an accumulation point. Similarly, if $a= (-\frac{1}{2}, y), 0 \leq y  \leq \frac{1}{2}$, then $|a| \geq \frac{1}{4}$ which means that such an $a$ also cannot be an accumulation point. Consequently, $a $ is the origin, $r = 0 = \lim _{n \to \infty}| f^{2n}(q)|$.  Then $f^{2n + 1}(q) \to (0, 0)$, since the origin is a fixed point for $f$, and hence $f^n(q) \to (0, 0)$ follows.

We turn to the backward orbits of points $q$ in $S$. It suffices to consider the two cases: $O^-_f(q)\subset S_1$ and $O^-_f(q)\subset S_2 \cup S'_2$ .  

Suppose now that $ q = (x,y) \in S_1$ and $f^{-n}(q) \in S_1$ for all $n\in \mathbb{N}$.  Letting $|(x,y)|=|y + \frac{x}{2}|$, we show that the sequence $(|f^{-n}(q)|)_{n \in \mathbb{N}}$ is strictly monotonically decreasing and converges to $0$.  To see this, we note that for all such $(x,y) \in S_1$, the following inequality holds:

\begin{align*}
 |f^{-1}(x,y)| - |(x,y)| & =  |\frac{1}{4}x + x^3 + y| -|y+\frac{x}{2}|\\ 
&= (-\frac{1}{4}x - x^3 -y) - (-y -\frac{x}{2}) \\
&=\frac{1}{4}x - x^3<0 \\
\end{align*}

which is true for $-\frac{1}{2}< x < 0$.  As the sequence $(|f^{-n}(q|)_{n \in \mathbb{N}}$ is strictly monotonically decreasing and bounded from below by zero, we conclude that $f^{-n}(q) \to (0,0)$.  Otherwise $(|f^{-n}(q)|)_{n \in \mathbb{N}}$ would converge to some constant $r>0$.  If the point $a$ is an arbitrary accumulation point for the backward orbit $(f^{-n}(q))_{n \in \mathbb{N}}$, then  $|a|=r$.  Since $f^{-1}(a)$ is also an accumulation point, it follows that $|f^{-1}(a)|=|a|=r$ and $a$ cannot be in $S_1$, implying that $a$ is on $\partial S_1$. By Proposition 5, $a$ cannot be on the negative $x-$ or $y-$ axes, and thus $a$ is either the origin or on $\partial S$. The latter situation would mean  $f^{-n}(a) \in \partial S$ for all $n$, contradicting Lemma 2.3 which states that  $a \in W^u(p, p')$. Therefore,  $f^{-n}(x,y) \to (0,0)$.

Using the pseudonorm defined by  $|y - \frac{x}{2}|$,  we now prove that $|f^{-1}(x,y)| - |(x,y)|> 0$  for every point $(x,y)\in S_2 \cup S_2'$.   Without loss of generality, suppose $(x,y)$ is in $S_2$.   By definition $|f^{-1}(x,y)|=  |\frac{7}{4}x - x^3 -y|$.  Since $\frac{7}{4}x - x^3 -y$ is negative and $y-\frac{x}{2}$ is positive for all relevant values of $x$ and $y$,  the result will follow if $- (\frac{7}{4}x - x^3 -y) > y-\frac{x}{2}$, i.e. $-\frac{7}{4}x +x^3 > -\frac{x}{2}$ which is true for all $x$ in the interval $(-\frac{1}{2}, 0)$.  Thus, $(|f^{-n}(x,y)|)_{n \in \mathbb{N}}$ is strictly monotonically increasing for all $(x,y) \in S_2 \cup S_2'$.  From this we may deduce that $(S_2 \cup S_2') \cap W^{u}(0) = \emptyset$. \\
 
\end{proof}

\textbf{Remark} \qquad $S \subset W^s(0)\cup\Omega_+\cup\Omega_- $, \qquad $S \subset W^u(\infty) \cup W^u(p, p') \cup W^u(0)$,\\
 \qquad $W^s(0) \cap S \subset S_2 \cup S'_2 $, \qquad $  W^u(0) \cap( S_2 \cup S'_2) = \emptyset$ \\

Combining Lemmas 2.3 and 2.4, we know the fate of every forward and every backward orbit of points in $R$:\\

\textbf{Lemma 2.5} The forward orbit of a point $q$ in $R$, which is not $p$ or $p'$, converges to $0$, $p_+$, or $p_-$, i.e. $q\in W^s(0)\cup\Omega_+\cup\Omega_-$. The backward orbit of a point $q$ in $R$  different from $p_+$ or $p_-$  converges to $0$, $\{p,p'\}$, or escapes, i.e. $q\in W^u(0)\cup W^u(p,p') \cup W^u(\infty)$.\\

In particular, we now know that $R$ is contained in the set $K_\R^+$ of real points with bounded forward orbits. Moreover, Lemma 2.1 implies the following result contained in [K, Proposition 7.10]:\\

\textbf{Remark} There are only 5 real periodic points for $f$, namely the three fixed points  $(0,0), p_+, p_-,$ and the period 2 cycle $ p,  p'$.\\

\maketitle
\section{STABLE AND UNSTABLE MANIFOLDS}

There is an unstable eigenvector $v$ of $D\,f^2(p)$ pointing into the interior of $Q_3$ and a parameterization $\Gamma:\;\R\to\R^2$ of the unstable manifold $W^u_{f^2}(p)$ of $p$ with respect to $f^2$ having $v$ as its tangent at $p$. A similar statement holds for a parameterization $\Gamma'$ of $W^u_{f^2}(p')$ replacing $p$ by $p'$ and $Q_3$ by $Q'_3$. Denote $\Gamma(\R_-)\cup\Gamma'(\R_-)$ by $W^u_-(p,p')$ for the negative real numbers $\R_-$and $W^u_+(p,p'):=\Gamma(\R_+)\cup\Gamma'(\R_+)$. Note that $W^u(p,p')=W^u_-(p,p')\cup W^u_+(p,p')\cup\{p,p'\}$.

\textbf{Theorem 3.1}

(i) $ W^u(0)\subset S_1\cup S'_1\cup \{0\} $\\
(ii) $\overline{W^u(0)}=W^u(0)\cup \{p_+,p_-\} $\\
(iii) $W_-^u(p,p')\subset int R\;and\;W_+^u(p,p')\subset Q_3\cup Q'_3$\\
(iv) $ W^u_-(p,p')\not\subset W^s(0)$\\
(v) $ W^s(p,p')\cap R=\{p,p'\}$

\begin{proof} $(i)$ If $q \not \in R$ and $q \not \in Q_3 \cup Q'_3$, then  $f^{-n}(q) \to \infty$ follows from  Lemma 2.1 and therefore $ q \not \in W^u(0)$ showing that $W^u(0) \subset R \cup (Q_3 \cup Q'_3)$.  Because $W^u(0)$ is connected, if there was a point in $W^u(0) \cap( Q_3 \cup Q'_3)$, then $p$ or $p'$ would have to be on $W^u(0)$, a contradiction. Thus, $W^u(0) \subset R$.  However, if $q\in W^u(0) \cap T$, Lemma 2.3 implies that $q \in W^u(p, p')$ would  result, a contradiction, and consequently, $W^u(0) \subset S$. By the above remark, $ W^u(0) \cap (S_2 \cup S'_2) = \emptyset$  and (i) follows.

$(ii)$ If $p_-$ is a limit point of the unstable manifold $W^u(0)$, then by symmetry ( using the reflection map ),  $p_+$ is also a limit point. To show that $p_-$ is a limit point, take any point $q \in W^u(0)$ which is not the origin. By $ (i)$ it is no restriction to assume that $q \in S_1$. After Lemma  2.4, $f^n(q) \to p_-$. The invariance of the unstable manifold implies that $f^n(q) \in W^u(0)$ and hence $p_- \in \overline{W^u(0}$. To see that $p_-$ and $p_+$ are the only limit points, let $q \in L = \overline{W^u(0)} \setminus W^u(0)$. By (i),  $q \in \overline{S_1} \cup \overline {S'_1}$. Without restriction, let $q \in \overline {S_1}$. Then 2.5 implies that $q \in \partial S_1$, because otherwise  $q \in  W^u(\infty) \cup W^u(p, p')$. Since the set $L$ of limit points is closed and invariant,  $f^{-n}(q) \in L$ for all $n$, implying that $q \notin W^u(\infty)$ and thus $q \in W^u(p, p')$ must follow. This in turn would mean that $\{p, p'\} \subset L \subset \overline{S}$ which is a contradiction. If $q \in \partial S_1$ but $q$ is not $p_-$, then by 2.3 it would follow that $q \in W^u(p, p')$  if $q$ was not on an axis in $S$ and once again the same contradiction. If $q$ was on an axis in $S$, then $f^2(q) \in S_1$ by Proposition 5 and the contradiction $q \in W^u(p, p')$ again follows from 2.5.

$(iii)$  Let $J_{f^2}(p)$ and $J_{f^2}(p')$ represent the Jacobian matrices of $f^2$ at $p$ and $p'$ respectively.  Then, by the chain rule, $J_{f^2}(p)=J_f(p)\cdot J_f(p')=J_{f^2}(p') = $

\[\begin{bmatrix}  \frac{1}{2} & -3 \\  -\frac{3}{2}  & \frac{19}{2}
\end{bmatrix}
\]
which yields the unstable eigenvector $e_u=\langle -.32, 1 \rangle$.  Hence, by the unstable manifold theorem, $W^u_{f^2}(p)$ (resp. $W^u_{f^2}(p')$) is tangent to the parameterization $\Gamma:\mathbb{R}\to\mathbb{R}^2$ defined by $t \mapsto p+t\cdot e_u$ (resp. $\Gamma':\mathbb{R}\to\mathbb{R}^2$ defined by $t \mapsto p'-t\cdot e_u$).  Thus, it is clear that  $W_-^u(p,p')=\Gamma(\mathbb{R}_-)\cup\Gamma'(\mathbb{R}_-)\subset R$ by the forward invariance of $R$.  Furthermore, since for $q\in Q_3\cup Q'_3, \;f^n(q)\to\infty$ as $n\to\infty$, it follows that  $W_+^u(p,p')=\Gamma(\mathbb{R}_+)\cup\Gamma'(\mathbb{R}_+)\subset Q_3\cup Q'_3$.

$(iv)$  If  $ W^u_-(p,p')\subset W^s(0)$ were to hold, assume that $W^u_-(p, p') = W^s(0) \setminus \{0\} $ would follow (a fact that will be proved later). Now it will be shown that $W^u_-(p, p') = W^s(0) \setminus \{0\} $  implies a contradiction. 
This assumption means that   $W^s(0) \subset int R$ by $(iii)$.  However, a point $q \in \partial R \cap W^s(0) $ can be constructed giving the contradiction. Let $a = (\frac{\sqrt{5}}{2}, -\frac{1}{3})$. Obviously, $a \in \partial R$. Furthermore,  
$f^2(a) \in S_1'$ which means by Lemma 2.4  that  $f^2(a) \in  \Omega_+$. Consequently, $a\in \Omega_+$ due to the invariance of basins of attraction. Let $b = (\frac{\sqrt{5}}{2}, -\frac {33}{32})$. Obviously, $b\in \partial R$. A calculation shows that $f^5(b) \in S_1$  and therefore  $b \in \Omega_-$, again because of 2.4 and invariance. \\
The line segment connecting $a$ and $b$ on $\partial R$  intersects  $\Omega_+$ as well as its complement and therefore must contain a point $q \in \partial
\Omega_+$. Since $q $ is in $R$ but neither in $\Omega _+$ nor in $\Omega_-$, 
by 2.5 $q\in W^s(0)$ must follow.\\
The final step is to prove that  $W^{u}_{-}(p,p') \subset W^s(0)$ implies $W^{u}_{-}(p,p')=  W^{s}(0) \setminus \{0\}$.  We denote the two path components of $W^s(0) \setminus \{0\}$ by $C_1$ and $C_2$.
Let $\Gamma: \R \to \R^2$ and $\Gamma ': \R \to \R^2$ denote the parametrizations of $W^u_{f^2}(p)$ resp. $W^u_{f^2}(p')$.  By definition, $W^{u}_{-}(p,p') = \Gamma(\R_-) \cup \Gamma ' (\R_-)$.  By our assumption, $\Gamma(\R_-),$ and $\Gamma ' (\R_-)$ are path connected subsets of $W^s(0) \setminus \{0\}$. Since  $\Gamma(\R_-)$ and    $\Gamma ' (\R_-)$ are both forward and backward invariant under $f^2$, we may conclude without restriction that $\Gamma(\R_-)= C_1$ and $\Gamma'(\R_-)= C_2$.

$(v)$ By Lemma 2.5, the forward orbit of a point $q$ in $R$ which is not $p$ or $p'$ converges either to $0$, $p_+$ or $p_-$.  In other words, such a point $q$ does not belong to $W^s(p, p')$.  Hence, $W^s(p, p') \cap R = \{p, p'\}$.    

\end{proof}

Note that Lemma 2.5 and Theorem 3.1(i) imply the\\

\textbf{Remark} $W^u(0)\setminus\{0\} \subset \Omega_+ \cup \Omega_-$\\


\maketitle
\section{JULIA SETS}

The real filled Julia sets $K_\R:= K\cap \R^2,\;K^+_\R:=K^+\cap \R^2,\;K^-_\R:=K^-\cap\R^2$ and the real Julia sets $J^+_\R:=J^+\cap\R^2,\;J^-_\R:=J^-\cap\R^2$ and $J_\R:=J\cap\R^2$ for $f$ can now be  calculated in terms of the stable and unstable manifolds of the 5 periodic points $\{0,p_+,p_-,p,p'\}$.
Note that $K^+ $ and $K^-$ are closed and $K$ is compact in $\C^2$ by [FM]. By definition, $J^+= \partial K^+, J^- = \partial K^-$ and $J= J^+\cap J^-$. Note also that 
$K^+_{\R} = W^s(K_{\R}) = \{ q \in \R^2 : d(f^n(q), K_{\R}) \to 0\}$ and $K^-_{\R} = W^u(K_{\R}) = \{ q \in \R^2 : d(f^{-n}(q), K_{\R}) \to 0\}$ where $d$ denotes the Euclidean metric in $\R^2$, see [BS]. In [BS] it is  shown that $J^+ $ is the closure in $\C^2$ of the stable manifold of any saddle point in $\C^2$ and  $J^+ $ is also the boundary in $\C^2$ of every complex basin of attraction.


\textbf{Theorem 4.1}
\begin{align*}
(i &) \; K_\R = \overline{W^u(0)} \cup W^u_-(p,p') \cup \{p,p'\}\\ 
(ii &) \; K_\R^+=\Omega_+\cup\Omega_-\cup W^s(0)\cup W^s(p,p') \\
(iii &) \; K^-_\R=\overline{W^u(0)}\cup W^u(p,p')\\
(iv &) \;J^+_\R=W^s(0) \cup W^s( p,p') \\
(v &) \;J^-_\R=K^-_\R \\
(vi &) \;J_\R=\{0,p,p'\}\cup(W^u_-(p,p')\cap W^s(0))
\end{align*}

\begin{proof}  (i)   Clearly,  $K_{\R} \supset \overline{W^u(0)}  \cup W^u_-(p,p') \cup \{p, p' \}$,  since $W^{u}(0) \subset R$, implying  $\overline{W^u(0)} \subset R,$ and $W^u_-(p,p') \subset R$ by Theorem 3.1 and because $R \subset K_{\R}^+$  by Lemma 2.5. \\
For the reverse inclusion,  let $q$ be a non-periodic point in $K_{\R}$.  As  $K_{\R}$ is invariant, the backward orbit of $q$ stays in $K_\R$ hence in $R$ due to Lemma 2.1.  Then, $ q \in W^u(0) \cup W^u(p, p')$ by Lemma 2.5.  \\
(ii) The inclusion of the right hand side of the equation in the left hand side is immediate.
Let $q \in K_\R^+$. If the forward orbit $O^+_f(q)$ eventually lands in $R$, then $q\in \Omega_+\cup \Omega_- \cup W^s(0)$ by Lemma 2.5. Now let  $O^+_f(q) \cap R = \emptyset$. Then $ d(f^n(q), K_{\R}) \to 0$ and $q \notin \{p, p', p_+, p_-\}$. But $K_{\R} \setminus \{p, p', p_+, p_-\}$ is in the interior of $R$ by Lemma 2.1, implying that $d(f^n(q), \{p, p', p_+, p_-\}) \to 0$. If $p_+ $ and $p_-$ are limit points of $O^+_{f}(q)$, then $q \in \Omega_+\cup \Omega_- $. If $q \notin \Omega_+\cup \Omega_- $, then $d(f^n(q), \{p, p'\}) \to 0$ and $q \in W^s(p, p')$.\\
(iii) The inclusion  $\supset$ is obvious. To show the opposite inclusion, let $q $ be a point in $ K_\R^-$  which is different from $p, p', p_+$  and $ p_-$. If $q \in R$, then $q \in K_\R$, since $R$ is forward invariant, and the claim follows from (i).  If $q \notin R$, then the entire backward orbit $O^-_f(q) $ is not in $R$. However, $K_\R^- = W^u(K_\R)$ implies  $d(f^{-n}(q), K_\R) \to 0$, and again from Lemma 2.1 it follows that $d(f^{-n}(q), \{p, p'\})\to 0$, i.e. $q\in W^u(p, p')$.\\
(iv) To prove the inclusion $\subset$, let $q\in J^+_\R $. Then $q\in K^+_\R$, since $ J^+_\R = \partial K^+ \cap \R^2 \subset K^+ _\R$ because $K^+$ is closed. On the other hand,  $q\in J^+ = \partial \Omega^C_+ = \partial\Omega^C_-$ by Theorem 2 of [BS],  where $   \Omega^C_+$ is the basin of attraction  of $p_+$ in $C^2 $ and $   \Omega^C_-$  that of $p_-$ . It follows that $q\notin \Omega_+$ and that $q\notin \Omega_-$. Part (ii) shows that $q \in W^s(0)\cup W^s(p,p') $. The opposite inclusion follows from Theorem 1 in [BS] which proves that the closure of the stable manifold in $C^2$ of any saddle point is $J^+$.\\
(v) The Jacobian determinant $det \,Df$ of $f$ is $-\frac{1}{2}$ which implies by [FM, Lemma 3.7] that the 4-dimensional Lebesque measure of $K^-$ is zero. Consequently, $K^-$  has no interior points and $J^- = \partial K^- = K^-$ gives the claim $J^-_\R = K^-_\R$.\\
(vi) Using (iv), (v), and (iii), 

\item $$J_\R =  (W^s(0) \cup W^s(p, p')) \cap (\overline{W^u(0)} \cup W^u(p, p'))$$
$$=(W^s(0) \cap \overline{W^u(0)}) \cup (W^s(p, p')\cap \overline{W^u(0)}) \cup (W^s(0)\cap W^u(p, p')) \cup (W^s(p, p')\cap W^u(p, p')).$$\\

We will treat the last intersection first and show that $W^s(p, p')\cap W^u(p, p') = \{p, p'\}$. Now $W^s(p, p') \cap (Q_3\cup Q'_3)
 = \{p,p'\}$,  because by Lemma 2.1 points in $Q_3 \cup Q'_3$ different from $p$ or $p'$ have orbits which escape to infinity  under forward iteration. Then $W^u_+(p, p') \subset Q_3 \cup Q'_3$ implies $W^s(p, p')\cap W^u_+(p, p') = \emptyset $. Because  $W^u_-(p, p')$ is in the interior of $R$, Theorem 3.1(v) shows that $W^s(p, p')\cap W^u_-(p, p') = \emptyset $.
 The second intersection is also the empty set by Theorem 3.1(v), since by 3.1(i) and (ii), $\overline{W^u(0)}$ is in $R$.
 The first intersection is equal to $\{0\}$, because $W^u(0)\subset S_1 \cup S'_1 \cup \{0\}$ by 3.1(i) and $S_1 \cup S'_1 \subset \Omega_+ \cup \Omega _-$ after 2.4.
 Finally, the third intersection is equal to $W^s(0) \cap W^u_-(p, p')$, due to $W^s(0) \cap \{p, p'\} = \emptyset$ and $W^s(0) \cap W^u_+(p, p') = \emptyset$, using the fact that  $W^u_+(p, p') \subset Q_3 \cup Q'_3$.
\end{proof}

\section{BASIN BOUNDARIES}

\begin{thm} $\partial\Omega_+=\partial\Omega_-=W^s(0)\cup W^s(p,p') =J^+_\R$
\end{thm}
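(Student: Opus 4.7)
The final equality $W^s(0) \cup W^s(p,p') = J^+_\R$ is Theorem 4.1(iv), and the reflection $\sigma$ through the origin commutes with $f$, sends $\Omega_+$ to $\Omega_-$, and fixes the set $W^s(0)\cup W^s(p,p')$ (since $\sigma(0)=0$ and $\sigma$ swaps $p,p'$); consequently $\partial\Omega_- = \sigma(\partial\Omega_+)$ will equal $W^s(0)\cup W^s(p,p')$ as soon as $\partial\Omega_+$ does. The problem therefore reduces to the single identity $\partial\Omega_+ = W^s(0)\cup W^s(p,p')$.

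For the inclusion $\partial\Omega_+ \subset W^s(0)\cup W^s(p,p')$, observe that $K^+$ is closed in $\C^2$, so $K^+_\R$ is closed in $\R^2$ and contains $\Omega_+$; hence $\overline{\Omega_+}\subset K^+_\R$ and in particular $\partial\Omega_+\subset K^+_\R$. Because $\Omega_+$ and $\Omega_-$ are open and disjoint, $\partial\Omega_+\cap(\Omega_+\cup\Omega_-)=\emptyset$, and the decomposition in Theorem 4.1(ii) forces $\partial\Omega_+\subset W^s(0)\cup W^s(p,p')$.

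The reverse inclusion proceeds in two stages, using that $\partial\Omega_+$ is closed and fully $f$-invariant (because $\Omega_+$ is fully invariant). \emph{Stage 1: $0,p,p'\in\partial\Omega_+$.} By Theorem 3.1(i) and Lemma 2.4, $W^u(0)\cap S'_1\subset\Omega_+$ accumulates at $0$, so $0\in\overline{\Omega_+}\setminus\Omega_+=\partial\Omega_+$. For the period-two points, Theorem 3.1(iii)--(v) combined with Theorem 4.1(ii) shows that $W^u_-(p,p')$ lies in the interior of $R$ and is contained in $\Omega_+\cup\Omega_-\cup W^s(0)$; Theorem 3.1(iv) then says it is not entirely in $W^s(0)$, so it meets $\Omega_+\cup\Omega_-$, and by $\sigma$-symmetry it meets $\Omega_+$ at some point $r$. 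If $r\in W^u(p)$, then $f^{-2n}(r)\to p$ along $W^u(p)\cap W^u_-(p,p')$ with every $f^{-2n}(r)\in\Omega_+$, so $p\in\partial\Omega_+$; if instead $r\in W^u(p')$, the same reasoning puts $p'\in\partial\Omega_+$ and then $p=f^{-1}(p')\in\partial\Omega_+$ by invariance. In either case one also obtains a sequence of $\Omega_+$-points on $W^u_{loc}(p)$ (or $W^u_{loc}(p')$) converging to $p$ (resp.\ $p'$).

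\emph{Stage 2: propagation along the stable manifolds via the $\lambda$-lemma.} Given $q\in W^s(0)$, pick $N$ so that $q_0=f^N(q)$ lies in $W^s_{loc}(0)$, and choose a short arc $D$ through $q_0$ transverse to $W^s_{loc}(0)$. The $\lambda$-lemma says $f^n(D)$ $C^1$-converges to $W^u_{loc}(0)$, so by iterating further the image $f^n(D)$ enters any prescribed neighborhood of a chosen compact sub-arc $K\subset W^u(0)\cap S'_1\subset\Omega_+$; since $\Omega_+$ is open and $f$-invariant, this forces $D\cap\Omega_+\neq\emptyset$. Taking $D$ arbitrarily short shows $q_0\in\partial\Omega_+$, and then $q=f^{-N}(q_0)\in\partial\Omega_+$ by invariance. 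The identical $\lambda$-lemma argument applied to $f^2$ at $p$, using the $\Omega_+$-points on $W^u_{loc}(p)$ produced in Stage 1, gives $W^s(p)\subset\partial\Omega_+$; finally $W^s(p')=f(W^s(p))\subset\partial\Omega_+$ by $f$-invariance. The main obstacle is Stage 1: converting the qualitative non-containment in Theorem 3.1(iv) into $\Omega_+$-points arbitrarily close to the period-two orbit along the local unstable manifold, which requires the case split on which branch of $W^u_-(p,p')$ meets $\Omega_+$ and a detour through $f$-invariance when it is the $W^u(p')$ branch.
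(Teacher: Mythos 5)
Your proposal is correct and follows essentially the same route as the paper: the forward inclusion via Theorem 4.1(ii) together with disjointness of $\partial\Omega_+$ from the open basins, and the reverse inclusion by using Theorem 3.1(iv) to locate $\Omega_+$-points on $W^u_-(p,p')$ and the $\lambda$-lemma to propagate membership in $\partial\Omega_+$ along the stable manifolds. The only cosmetic differences are that the paper handles $W^s(0)$ without the $\lambda$-lemma (directly from Lemma 2.4, since every point of $S\setminus W^s(0)$ already lies in $\Omega_+\cup\Omega_-$) and runs the $\lambda$-lemma at $\{p,p'\}$ in the backward direction, pushing a transverse curve contained in $\Omega_+$ back onto $W^s_{\epsilon}(p,p')$, rather than your dual forward formulation.
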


\begin{proof}
It is clear that $\Omega_+$ and $\Omega_-$ both lie in $K^+_\R$. Then $\partial \Omega_+ \cup \partial \Omega_- \subset K^+_\R$ follows, since $K^+_\R$ is closed, implying by Theorem 4.1(ii)  that  $\partial \Omega_+ \cup \partial \Omega_- \subset W^s(0)\cup W^s(p,p')$. \\
To prove the opposite inclusion, it will first be shown that $W^s(0) \subset \partial \Omega_+$. Due to the invariance of the basin boundary,  it suffices to show that the local stable manifold of the origin $W^s_{\epsilon}(0)$  of size $\epsilon$ is contained in $\partial \Omega_+$. Without restriction, $W^s_{\epsilon}(0) \subset S_2 \cup S'_2$.  Let $q \in W^s_{\epsilon}(0) \cap S_2$, and let $U$ be a polydisk around $q$. Every point $q'$ in $U \setminus W^s(0) $ is in $ \Omega _+ \cup \Omega_-$ by 2.4. If $q'\in \Omega_+$, then  $q \in \partial \Omega_+$  and  $\sigma(q) \in \partial\Omega_-$. \\
It remains to show  $W^s(p, p') \subset \partial \Omega_+ $, since $W^s(p, p') \subset \partial \Omega_-$ follows due to $\sigma(W^s(p, p')) = W^s(p, p'))$ and $\sigma(\partial\Omega_+) = \partial \Omega_-$. By 3.1(iv), there is  a point $ q \in W^{u}_{-}(p,p^{'})$ such that $q \notin W^{s}(0)$. Then $q $ is in the interior of $R$ and by 2.5, $q \in \Omega_{+} \cup \Omega_{-}$. Without restriction, let $q \in \Omega_+$. An application of the Lambda Lemma  (see[R]) will be used. Take a curve $C$ through $q$ transversal to $W^u_-(p, p')$ which is contained in $\Omega_+$. Then parts of the backward iterates $f^{-n}(C)$  converge to the local stable manifold $W^s_{\epsilon}(p, p')$ of $\{p, p'\}$   in the $C^1$ topology proving $W^s(p, p') \subset \partial \Omega_+$.

\end{proof}

\begin{figure}[H]
\includegraphics[keepaspectratio=true,scale=0.2]{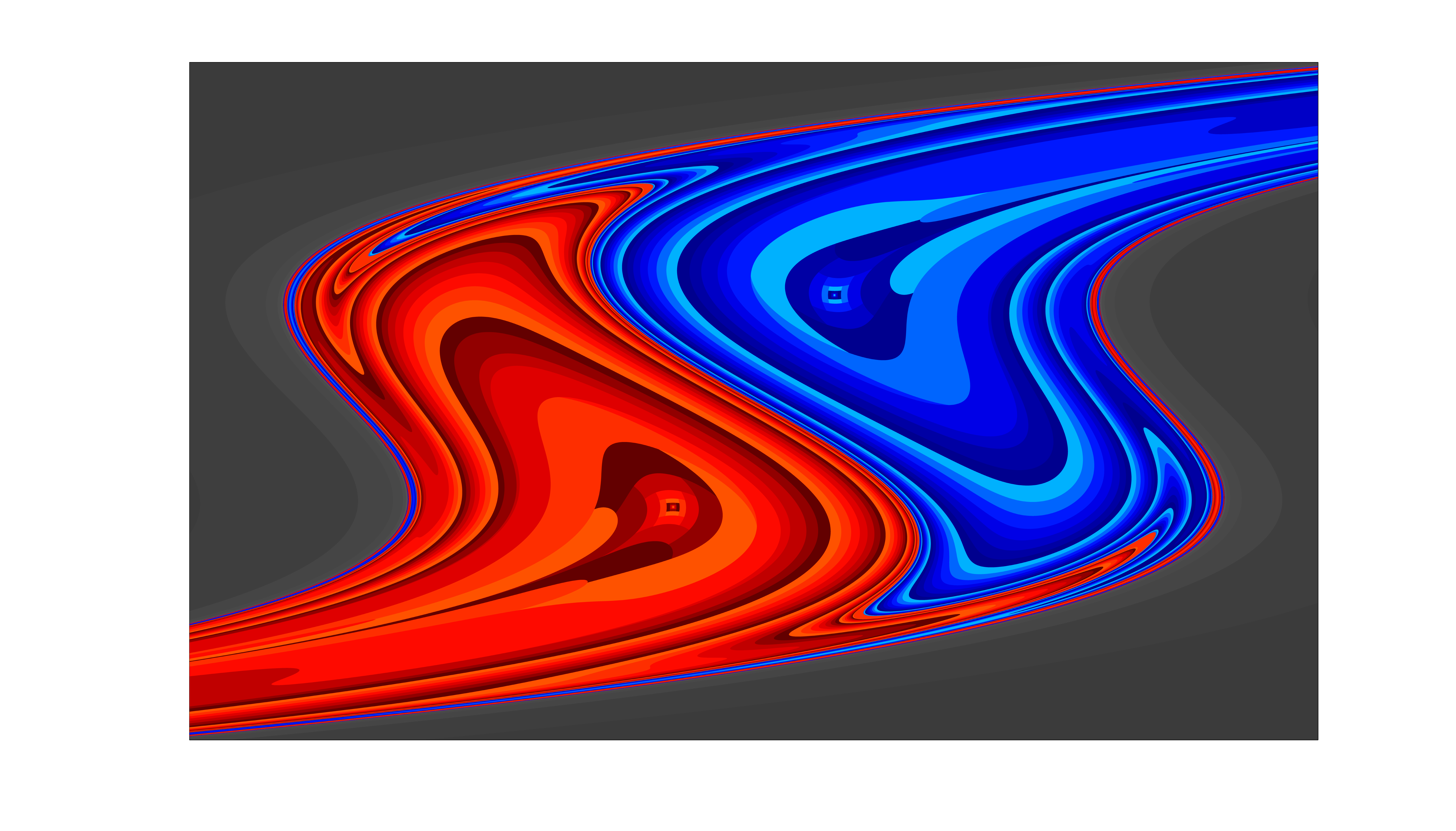}
\caption{The Real Section under Bieberbach's Map}
\end{figure}

\textbf{REFERENCES}\\

[BS] E. Bedford and J. Smillie, \textit {Fatou-Bieberbach domains arising from polynomial automorphisms}, Indiana Univ. Math. J. \textbf{40} (1991) 789-792.

[BS2] E. Bedford and J. Smillie, \textit{ Polynomial Automorphisms of $\C^2$. II: Stable Manifolds and Recurrence}, J. Amer. Math.Soc. \textbf{4} (1991)  657-679.

[FM] S. Friedland and J. Milnor, \textit {Dynamical properties of plane polynomial automorphisms}, Ergod. Th. and Dynam. Sys. \textbf{9} (1989), no.1, 67-99.

[FS] J.E. Fornaess and N. Sibony, \textit {Complex H\'enon mappings in $\C^2 $ and Fatou-Bieberbach domains}, Duke Math. J. \textbf{65} (1992), no.2, 345-380.

[H] S. Hayes, Fatou-Bieberbach Gebiete in  $\C^2$, Deutsche Mathematiker Vereinigung Mitteilungen, (1995), 14-18.

[K] T. Kimura, \textit{ On Fatou-Bieberbach domains in $\C^2$}, J. Fac. Sci. Univ. Tokyo, Sect. IA Math. \textbf{35} (1988), no.1, 103-148.

[R] C. Robinson, \textit{Dynamical Systems (Stability, Symbolic Dynamics, and Chaos)}, CRC Press, 1999.\\

Sandra Hayes \\                   
Department of Mathematics,                                                                                                       
City College of CUNY,
New York, NY 10031: shayes@gc.cuny.edu

Axel Hundemer\\
Department of Mathematics and Statistics,
McGill University
Montreal, Quebec, CA: hundemer@math.mcgill.ca

 Evan Milliken\\
 Department of Mathematics,
 University of Florida,
 Gainesville, FL : evmilliken@ufl.edu

 Tasos Moulinos\\
 Department of Mathematics, Statistics and Computer Science
 The University of Illinois at Chicago:  tmouli2@uic.edu

\end{document}